\newtheorem{lemma}{Lemma}[section]
\newtheorem{proposition}{Proposition}[section]
\theoremstyle{remark}
\newtheorem{remark}{Remark}[section]
\theoremstyle{definition}
\numberwithin{equation}{section}
\begin{document}

\title[Parabolic problems and interpolation with a function parameter]
{Parabolic problems and\\interpolation with a function parameter}

\author[V. Los]{Valerii Los}

\address{Department of Higher and Applied Mathematics,
Chernigiv State Technological University, 95 Shevchenka,
Chernigiv, 14027, Ukraine}

\email{v$\underline{\phantom{k}}$\,los@yahoo.com}


\author[A. Murach]{Aleksandr A. Murach}

\address{Institute of Mathematics, National Academy of Sciences of Ukraine,
3 Tereshchenkivs'ka, Kyiv, 01601, Ukraine}

\email{murach@imath.kiev.ua}

\thanks{This research was partly supported by grant no. 01/01.12 of
National Academy of Sciences of Uk\-raine (under the joint Ukrainian--Russian
project of NAS of Ukraine and Russian Foundation of Basic Research).}

\subjclass[2000]{Primary 35K35, 46B70; Secondary 46E35}


\keywords{Parabolic problem, interpolation with a function
parameter, anisotropic Sobolev space, space of generalized
smoothness, refined Sobolev scale, slowly varying function,
isomorphism property.}

\begin{abstract}
We give an application of interpolation with a function parameter to parabolic
differential operators. We introduce the refined anisotropic Sobolev scale that
consists of some Hilbert function spaces of generalized smoothness. The latter is
characterized by a real number and a function varying slowly at infinity in
Karamata's sense. This scale is connected with anisotropic Sobolev spaces by means
of interpolation with a function parameter. We investigate a general initial--boundary
value parabolic problem in the refined Sobolev scale. We prove that the operator
corresponding to this problem sets isomorphisms between appropriate spaces
pertaining to this scale.
\end{abstract}

\maketitle

\section{Introduction}\label{sec1}

In the theory of partial differential equations, the question about regularity
properties of solutions to equations is of great importance. As a rule, an answer to
this question is given in the form of sufficient conditions for the solutions to
belong to certain function spaces. The latter depend on a finite collection of
number parameters and form a scale of spaces. The more finely the scale is calibrated by
these parameters, the more precise and complete an information about the solution
properties will be.

Basically, researchers use the two scales formed by Sobolev spaces and
H\"older--Zygmund spaces respectively. For these scales, we have the theory of
general elliptic boundary--value problems \cite{AgmonDouglisNirenberg59, Agmon65,
Hormander63, LionsMagenes72i, Roitberg96, Triebel95, WlokaRowleyLawruk95} and
parabolic initial--boundary value problems \cite{Eidelman69, EidelmanZhitarashu98,
Friedman64, LadyzhenskajaSolonnikovUraltzeva67, LionsMagenes72ii} (also see the
surveys \cite{Agranovich97, Eidelman94} and the bibliography given therein).
However, these scales proved to be coarse for some applications to differential
operators \cite{Hormander63, Hormander83, MikhailetsMurach10, 11UMJ11, 11MFAT4}.

In this connection, of interest are spaces for which a function parameter, not a
number, serves as an smoothness index. They are called spaces of generalized
smoothness. Important classes of such spaces were introduced and investigated by
L.~H\"ormander \cite{Hormander63} and L.~R.~Volevich, B.~P.~Paneah
\cite{VolevichPaneah65}. L.~H\"ormander \cite{Hormander63, Hormander83} gave a
systematical application of these spaces to the research on regularity
properties of solutions to hypoelliptic equations. Nowadays spaces of generalized
smoothness are used in various investigations \cite{Jacob010205, NicolaRodino10,
Paneah00, Triebel01}.

As regards applications~-- specifically, to the spectral theory of
differential equations~-- scales of Hilbert function spaces are
especially important. Until recently only Hilbert Sobolev scale
and its various weighted or anisotropic modifications have been
used in the theory of differential equations. Lately
V.~A.~Mikhai\-lets and the second author [18--24, 27--29]
elaborated the theory of general elliptic differential operators
and elliptic boundary--value problems in the Hilbert scales formed
by the H\"ormander spaces
\begin{equation}\label{f1.1}
H^{s,\varphi}:=B_{2,\mu}\quad\mbox{for}\quad
\mu(\xi):=(1+|\xi|^{2})^{s/2}\varphi\bigl((1+|\xi|^{2})^{1/2}\bigr).
\end{equation}
Here the number parameter $s$ is real, whereas the function
parameter $\varphi$ varies slowly at infinity in J.~Karamata's
sense. For example, $\varphi$ can be logarithmic function, its
iterations, each of their powers, and multiplications of these
functions. The class of the spaces \eqref{f1.1} contains the
Sobolev scale $\{H^{s}\}=\{H^{s,1}\}$ and is attached to it by
means of $s$ but is calibrated more finely than the Sobolev scale.
The number parameter $s$ sets the main (power) smoothness, while
the function parameter $\varphi$ defines a supplementary
(subpower) smoothness. The latter may give the broader or narrower
space $H^{s,\varphi}$ as compared with~$H^{s}$.

The spaces \eqref{f1.1} form the refined Sobolev scale. It possesses an important
interpolation property. Namely, each space \eqref{f1.1} can be obtained by
interpolation, with an appropriate function parameter, of a certain couple of
Sobolev spaces (see \cite[Sec. 3.2]{08MFAT1} or \cite[Sec.
1.3.4]{MikhailetsMurach10}). This parameter is a function that varies slowly of an
index $\theta\in(0,1)$ at infinity, in the sense of J.~Karamata. The refined Sobolev
scale is closed with respect to the interpolation with these function parameters.

For linear operators, their boundedness and Fredholm property will
be preserved when the interpolation of the corresponding spaces is
done. This fact allowed the authors by \cite{MikhailetsMurach10}
to transfer, to the full extent, the classical (Sobolev) theory of
elliptic partial differential equations to the case of the refined
Sobolev scale. As an application of this theory, we mention the
theorems on convergence almost everywhere and uniformly of
spectral expansions in eigenfunctions of self-adjoint positive
elliptic differential operators (see
\cite[Sec.~2.3]{MikhailetsMurach10} or \cite[Sec.~7.2]{12BJMA2}).
It is essential for this theorems that the smoothness index is a
function parameter (also see \cite{11UMJ11, 11MFAT4}).

Note that, the interpolation with a power parameter $\varphi(t)\equiv t^{\theta}$
plays an important role in the Sobolev theory of partial differential equations, the
exponent $\theta$ serving as a number parameter of the interpolation. A systematic
application of this interpolation to various classes of differential operators is
given by J.-L.~Lions, E.~Magenes \cite{LionsMagenes72i, LionsMagenes72ii}, and
H.~Triebel \cite{Triebel83, Triebel95}.

In this paper we give an application of interpolation with a
function parameter to parabolic partial differential equations.
They differ from elliptic equations in disparity of independent
variables (temporal and spatial), which implies the need to use
anisotropic function spaces. Therefore we introduce a certain
anisotropic analog of the refined Sobolev scale. For this analog,
we establish a theorem on the isomorphisms that are realized by
the operator corresponding to an initial--boundary value problem
for a pa\-ra\-bo\-lic equation of an arbitrary even order. This
theorem will be proved by means of the interpolation with a
function parameter between anisotropic Sobolev spaces. We use
regularly varying functions as interpolation parameters. In
order that our reasoning should be more transparent, we restrict
ourselves to the two-dimensional case and assume that the initial
conditions are homogeneous.

The paper consists of six sections. Section~\ref{sec1} is Introduction. In
Section~\ref{sec2}, we state an initial-boundary value problem for a general
parabolic equation given in a rectangular planar domain. Here we also formulate the
main result of the paper, the theorem on isomorphisms. In Section~\ref{sec3}, we
introduce and discuss the refined anisotropic Sobolev scale over $\mathbb{R}^{2}$
and its analogs for the rectangular domain. These analogs conform to the parabolic
problem under consideration. In Section~\ref{sec4}, we give necessary facts about
the interpolation with a function parameter between general Hilbert spaces. Main
Theorem is proved in Section~\ref{sec5}. Here we previously deduce necessary
interpolation formulas, which connect the introduced scale with anisotropic Sobolev
spaces. In the last section, \ref{sec6}, we indicate some applications and
generalizations of Main Theorem.

\section{Statement of the problem and main result}\label{sec2}

Let $\Omega:=(0,l)\times(0,\tau)$, where positive numbers $l$ and $\tau$ are chosen
arbitrarily. Consider the following linear parabolic initial--boundary value problem
in the open rectangle~$\Omega$:
\begin{align}\notag
&A(x,t,D_x,\partial_t)u(x,t)\\
&\equiv\sum_{\alpha+2b\beta\leq 2m}a^{\alpha,\beta}(x,t)\,D^\alpha_x\partial^\beta_t
u(x,t)=f(x,t)\quad\text{in}\quad\Omega,\label{f2.1}\\
&B_{j,0}(t,D_x,\partial_t)u(x,t)\big|_{x=0}\notag\\
&\equiv\sum_{\alpha+2b\beta\leq m_j}
b_{j,0}^{\alpha,\beta}(t)\,D^\alpha_x\partial^\beta_t u(x,t)\big|_{x=0}=g_{j,0}(t)
\quad\mbox{and}\label{f2.2}\\
&B_{j,1}(t,D_x,\partial_t)u(x,t)\big|_{x=l}\notag\\
&\equiv\sum_{\alpha+2b\beta\leq m_j}
b_{j,1}^{\alpha,\beta}(t)\,D^\alpha_x\partial^\beta_t
u(x,t)\big|_{x=l}=g_{j,1}(t)\label{f2.3}\\
&\quad\;\mbox{for}\quad0<t<\tau\quad\mbox{and}\quad j=1,\dots,m,\notag\\
&\frac{\partial^k u(x,t)}{\partial
t^k}\bigg|_{t=0}=0\quad\mbox{for}\quad0<x<l\quad\mbox{and}\quad
k=0,\ldots,\varkappa-1.\label{f2.4}
\end{align}

Here $b$, $m$, and all $m_j$ are arbitrarily fixed integers such that $m\geq
b\geq1$, $\varkappa:=m/b\in\mathbb{Z}$, and $m_j\geq0$. All coefficients of the
partial differential expressions $A:=A(x,t,D_x,\partial_t)$ and
$B_{j,k}:=B_{j,k}(t,D_x,\partial_t)$, with $j\in\{1,\dots,m\}$ and $k\in\{0,\,1\}$,
are supposed to be complex-valued and infinitely smooth functions; namely,
$a^{\alpha,\beta}\in C^{\infty}(\overline{\Omega})$ and $b_{j,k}^{\alpha,\beta}\in
C^{\infty}[0,\tau]$, where $\overline{\Omega}:=[0,l]\times[0,\tau]$ as usual. We use
the notation $D_x:=i\,\partial/\partial x$ and $\partial_t:=\partial/\partial t$ for
partial derivatives and take summation over the integer-valued indexes
$\alpha,\beta\geq0$ satisfying the conditions indicated.

Recall \cite[\S~9, Subsec.~1]{AgranovichVishik64} that the initial--boundary value
problem \eqref{f2.1}--\eqref{f2.4} is said to be parabolic in $\Omega$ if the
following three conditions are fulfilled:
\begin{itemize}
\item [(i)] Given any $x\in[0,l]$, $t\in[0,\tau]$, $\xi\in\mathbb{R}$, and
$p\in\mathbb{C}$ with $\mathrm{Re}\,p\geq0$, we have
\begin{align*}
&A^{(0)}(x,t,\xi,p)\\
&\equiv\sum_{\alpha+2b\beta=2m} a^{\alpha,\beta}(x,t)\,\xi^\alpha
p^{\beta}\neq0\quad \mbox{whenever}\quad|\xi|+|p|\neq0.
\end{align*}
\item [(ii)] Let $x\in\{0,l\}$, $t\in[0,\tau]$, and $p\in\mathbb{C}\setminus\{0\}$ with
$\mathrm{Re}\,p\geq0$ be arbitrary. Then the polynomial $A^{(0)}(x,t,\xi,p)$ in
$\xi\in\mathbb{C}$ has $m$ roots $\xi^{+}_{j}(x,t,p)$, $j=\nobreak1,\ldots,m$, with
positive imaginary part and $m$ roots with negative imaginary part provided that
each root is taken the number of times equal to its multiplicity.
\item [(iii)] Assume that $x$, $t$, and $p$ are the same as ones considered in (ii).
Let $k:=0$ if $x=0$, and let $k:=1$ if $x=l$. Then the polynomials
$$
B_{j,k}^{(0)}(t,\xi,p)\equiv\sum_{\alpha+2b\beta=m_{j}}
b_{j,k}^{\alpha,\beta}(t)\,\xi^{\alpha}p^{\beta},\quad j=1,\dots,m,
$$
in $\xi$ are linearly independent modulo
$$
\prod_{j=1}^{m}\bigl(\xi-\xi^{+}_{j}(x,t,p)\bigr).
$$
\end{itemize}

Consider the linear mapping
\begin{equation}\label{f2.5}
\begin{aligned}
C^{\infty}_{+}(\overline{\Omega}) & \ni u\mapsto (Au,Bu)
\\
& :=\bigl(Au,B_{1,0}u,B_{1,1}u,\ldots,B_{m,0}u,B_{m,1}u\bigr)\in
C^{\infty}_{+}(\overline{\Omega})\times\bigl(C^{\infty}_{+}[0,\tau]\bigr)^{2m},
\end{aligned}
\end{equation}

which is associated with the parabolic problem \eqref{f2.1}--\eqref{f2.4}. Here
\begin{equation*}
\begin{aligned}
C^{\infty}_{+}(\overline{\Omega}):&=\bigl\{w\!\upharpoonright\overline{\Omega}:\,
w\in C^{\infty}(\mathbb{R}^{2}),\;\,
\mathrm{supp}\,w\subseteq\mathbb{R}\times[0,\infty)\bigr\}\\
&=\bigl\{u\in
C^{\infty}(\overline{\Omega}):\,\partial_{t}^{\beta}u(x,t)|_{t=0}=0\;\,\mbox{for
all}\;\,\beta\in\mathbb{N}\cup\{0\},\;x\in[0,l]\bigr\}
\end{aligned}
\end{equation*}
and
\begin{equation*}
\begin{aligned}
C^{\infty}_{+}[0,\tau]:&=\bigl\{h\!\upharpoonright[0,\tau]:\,h\in
C^{\infty}(\mathbb{R}),\;\,\mathrm{supp}\,h\subseteq[0,\infty)\bigr\}\\
&=\bigl\{v\in C^{\infty}[0,\tau]:\,v^{(\beta)}(0)=0\;\,\mbox{for
all}\;\,\beta\in\mathbb{N}\cup\{0\}\bigr\}.
\end{aligned}
\end{equation*}
In the paper, functions (and distributions) are supposed to be complex-valued unless otherwise stated.

The mapping \eqref{f2.5} sets a one-to-one correspondence between the spaces
$C^{\infty}_{+}(\overline{\Omega})$ and
$C^{\infty}_{+}(\overline{\Omega})\times\bigl(C^{\infty}_{+}[0,\tau]\bigr)^{2m}$
(see Remark \ref{rem3.3} below). Our purpose is to show that this mapping extends by
continuity to an isomorphism between appropriate couples of Hilbert function spaces
of generalized smoothness. Namely, we will prove the following result.

Let $\sigma_0$ be the smallest integer such that
$$
\sigma_0\geq2m,\quad\sigma_0\geq m_j+1\;\;\mbox{for all}\;\;j\in\{1,\ldots,m\},
\quad\mbox{and}\quad\frac{\sigma_0}{2b}\in\mathbb{Z}.
$$
Note, if $m_j\leq2m-1$ for every $j\in\{1,\ldots,m\}$, then $\sigma_0=2m$.

\medskip

\textbf{Main Theorem.} \it Let a real number $\sigma>\sigma_0$ and function
parameter $\varphi\in\mathcal{M}$ be chosen arbitrarily. Then the mapping
\eqref{f2.5} extends uniquely (by continuity) to an isomorphism
\begin{equation}\label{f2.6}
\begin{split}
(A,B)& :\,H^{\sigma,\sigma/(2b),\varphi}_{+}(\Omega)
\\
& \leftrightarrow\;
H^{\sigma-2m,(\sigma-2m)/(2b),\varphi}_{+}(\Omega)\oplus
\bigoplus_{j=1}^{m}\bigl(H^{(\sigma-m_j-1/2)/(2b),\varphi}_{+}(0,\tau)\bigr)^{2}.
\end{split}
\end{equation} \rm

The class $\mathcal{M}$ and the Hilbert function spaces occurring in \eqref{f2.6}
will be defined in the next section. These spaces form the refined Sobolev scales.

If $\varphi\equiv1$, then the operator \eqref{f2.6} acts between Sobolev spaces. In
this case, this theorem was proved by M.~S.~Agranovich and M.~I.~Vishik
\cite[Theorem 11.1]{AgranovichVishik64} on the assumption that
$\sigma/(2b)\in\mathbb{Z}$. Their result includes the limiting case of
$\sigma=\sigma_0$ and relates to general parabolic problems with nonhomogeneous
initial conditions.

\section{Refined Sobolev scales}\label{sec3}

In this section we will introduce and discuss the function spaces used in the
statement of Main Theorem. The regularity properties of the distributions belonging
to these spaces are characterized by two number parameters and a function parameter.
The latter runs over a certain function class $\mathcal{M}$, which is defined as
follows.

The class $\mathcal{M}$ consists of all functions
$\varphi:[1,\infty)\rightarrow(0,\infty)$ such that
\begin{itemize}
\item [a)] $\varphi$ is Borel measurable on $[1,\infty)$;
\item [b)] both the functions $\varphi$ and $1/\varphi$ are bounded on each
compact interval $[1,b]$, with $1<b<\infty$;
\item [c)] $\varphi$ is a slowly varying function at infinity in the sense of
J.~Karamata; i.e.,
\begin{equation}\label{f3.1}
\lim_{r\rightarrow\infty}\frac{\varphi(\lambda r)}{\varphi(r)}=1\quad\mbox{for
every}\quad \lambda>0.
\end{equation}
\end{itemize}

\begin{remark}\label{rem3.1} \rm
The theory of slowly varying functions is set forth in the monographs
\cite{BinghamGoldieTeugels89, Seneta76}. We give an important and standard example
of functions satisfying \eqref{f3.1} if we put
\begin{equation}\label{f3.2}
\varphi(r):=(\log r)^{\theta_{1}}\,(\log\log r)^{\theta_{2}} \ldots
(\,\underbrace{\log\ldots\log}_{k\;\mbox{\small{times}}}r\,)^{\theta_{k}}
\quad\mbox{for}\quad r\gg1,
\end{equation}
where the parameters $k\in\mathbb{N}$ and $\theta_{1},
\theta_{2},\ldots,\theta_{k}\in\mathbb{R}$ are chosen arbitrarily. The functions
\eqref{f3.2} form the logarithmic multiscale, which has a number of applications in
the theory of function spaces. Some other examples of slowly varying functions can
be found in \cite[Sec. 1.3.3]{BinghamGoldieTeugels89} and \cite[Sec.
1.2.1]{MikhailetsMurach10}.
\end{remark}

Let $s\in\mathbb{R}$, $\varphi\in\mathcal{M}$, and $\gamma:=1/(2b)$. By definition,
the linear space $H^{s,s\gamma,\varphi}(\mathbb{R}^{2})$ consists of all tempered
distributions $w\in \mathcal{S}'(\mathbb{R}^{2})$ such that their Fourier transform
$\widetilde{w}$ (in two variables) is locally Lebesgue integrable over
$\mathbb{R}^{2}$ and satisfies the condition
\begin{equation}\label{f3.3}
\int\limits_{-\infty}^{\infty}\,\int\limits_{-\infty}^{\infty}
r_{\gamma}^{2s}(\xi,\eta)\,\varphi^{2}(r_{\gamma}(\xi,\eta))\,
|\widetilde{w}(\xi,\eta)|^{2}\,d\xi d\eta<\infty.
\end{equation}
Here and below we use the notation
$$
r_{\gamma}(\xi,\eta):=\bigl(1+|\xi|^2+|\eta|^{2\gamma}\bigr)^{1/2}\quad\mbox{for
each}\quad\xi,\eta\in\mathbb{R}.
$$
The space $H^{s,s\gamma,\varphi}(\mathbb{R}^{2})$ is endowed with the inner product
$$
(w_{1},w_{2})_{H^{s,s\gamma,\varphi}(\mathbb{R}^{2})}:=
\int\limits_{-\infty}^{\infty}\,\int\limits_{-\infty}^{\infty}
r_{\gamma}^{2s}(\xi,\eta)\,\varphi^{2}(r_{\gamma}(\xi,\eta))\,
\widetilde{w_{1}}(\xi,\eta)\,\overline{\widetilde{w_{2}}(\xi,\eta)}\,d\xi d\eta,
$$
where $w_{1},w_{2}\in H^{s,s\gamma,\varphi}(\mathbb{R}^{2})$. It induces the norm
$$
\|w\|_{H^{s,s\gamma,\varphi}(\mathbb{R}^{2})}:=
(w,w)_{H^{s,s\gamma,\varphi}(\mathbb{R}^{2})}^{1/2},
$$
which is equal to the square root of the left-hand side of inequality \eqref{f3.3}.

Note that $H^{s,s\gamma,\varphi}(\mathbb{R}^{2})$ is the inner
product H\"ormander space $\mathcal{B}_{2,\mu}(\mathbb{R}^{2})$
which corres\-ponds to the function parameter $$
\mu(\xi,\eta):=r_{\gamma}^{s}(\xi,\eta)\,\varphi(r_{\gamma}(\xi,\eta))\quad
\mbox{for}\quad\xi,\eta\in\mathbb{R}. $$ We refer the reader to
the monographs by L.~H\"ormander \cite[Sec.~2.2]{Hormander63},
\cite[Sec.~10.1]{Hormander83}, and to the paper by L.~R.~Volevich
and B.~P.~Paneah \cite{VolevichPaneah65}, where such spaces are
investigated systematically. It follows from properties of
H\"ormander spaces that the space
$H^{s,s\gamma,\varphi}(\mathbb{R}^{2})$ is Hilbert and separable,
is embedded continuously in $\mathcal{S}'(\mathbb{R}^{2})$, and
that
the set $C^{\infty}_{0}(\mathbb{R}^{2})$ is dense in
$H^{s,s\gamma,\varphi}(\mathbb{R}^{2})$.

\begin{remark}\label{rem3.2} \rm
We use conventional notation for main function spaces. So,
$\mathcal{S}'(\mathbb{R}^{n})$ denotes the linear topological L.~Schwartz space of
all tempered distributions given in $\mathbb{R}^{n}$, with $n\in\mathbb{N}$. If $G$
is an open subset of $\mathbb{R}^{n}$ (in particular, $G=\mathbb{R}^{n}$), then
$C^{\infty}_{0}(G)$ stands for the class of all functions $w\in
C^{\infty}(\mathbb{R}^{n})$ such that their support is a compact subset of $G$. We
may naturally identify a function $w\in C^{\infty}_{0}(G)$ with its restriction to
$G$; from the context it will always be understood on which set~--- $\mathbb{R}^{n}$
or $G$~--- the function $w$ is considered. The designation $L_{2}(G,d\mu)$ refers to
the Hilbert space of all functions that are square integrable over $G$ with respect
to a Radon measure~$\mu$. Specifically, if $\mu$ is the Lebesgue measure, then we
omit $d\mu$ and write $L_{2}(G)$.
\end{remark}

If $\varphi(r)\equiv1$, then $H^{s,s\gamma,\varphi}(\mathbb{R}^{2})$ becomes the
anisotropic Sobolev space of order $(s,s\gamma)$; we denote this space by
$H^{s,s\gamma}(\mathbb{R}^{2})$. Note that, in the case where
$s,s\gamma\in\mathbb{N}$ the space $H^{s,s\gamma}(\mathbb{R}^{2})$ consists of all
functions $w(x,t)$ such that $w$, $D_{x}^{s}w$, and $\partial_{t}^{s\gamma}w$ are
square integrable over $\mathbb{R}^{2}$, providing the partial derivatives are
understood in the sense of the theory of distributions. In this case, we have the
equivalence of Hilbert norms
\begin{equation}\label{f3.4}
\|w\|_{H^{s,s\gamma}(\mathbb{R}^{2})}
\asymp\Biggl(\:\int\limits_{-\infty}^{\infty}\int\limits_{-\infty}^{\infty}
\bigl(|w(x,t)|^{2}+|D_{x}^{s}w(x,t)|^{2}+|\partial_{t}^{s\gamma}w(x,t)|^{2}\bigr)
dxdt\Biggr)^{1/2}.
\end{equation}

Every space $H^{s,s\gamma,\varphi}(\mathbb{R}^{2})$, with $s\in\mathbb{R}$ and
$\varphi\in\mathcal{M}$, is closely connected to aniso\-tropic Sobolev spaces.
Specifically, we have the continuous and dense embeddings
\begin{equation}\label{f3.5}
H^{s_{1},s_{1}\gamma}(\mathbb{R}^{2})\hookrightarrow
H^{s,s\gamma,\varphi}(\mathbb{R}^{2})\hookrightarrow
H^{s_{0},s_{0}\gamma}(\mathbb{R}^{2})\quad\mbox{whenever}\quad s_{0}<s<s_{1}.
\end{equation}
They follow from the next property of $\varphi\in\mathcal{M}$: for each
$\varepsilon>0$ there exists a number $c=c({\varepsilon})\geq1$ such that
$c^{-1}r^{-\varepsilon}\leq\varphi(r)\leq c\,r^{\varepsilon}$ for all $r\geq1$ (see
\cite[Sec.~1.5, Subsec.~1]{Seneta76}).

Consider the class of Hilbert function spaces
\begin{equation}\label{f3.6}
\bigl\{H^{s,s\gamma,\varphi}(\mathbb{R}^{2}):\,
s\in\mathbb{R},\,\varphi\in\mathcal{M}\,\bigr\}.
\end{equation}
Owing to the embeddings \eqref{f3.5}, we may assert that in \eqref{f3.6} the function
parameter $\varphi$ defines a supplementary (subpower) smoothness with respect to
the basic (power) anisotropic $(s,s\gamma)$-smoothness. Specifically, if
$\varphi(r)\rightarrow\infty$ [$\varphi(r)\rightarrow0$] as $r\rightarrow\infty$,
then $\varphi$ defines a positive [negative] supplementary smoothness. In other
words, $\varphi$ refines the power smoothness $(s,s\gamma)$.

Therefore we will naturally call \eqref{f3.6} the refined anisotropic Sobolev scale
over $\mathbb{R}^{2}$; here $\gamma$ serves as an anisotropy parameter.

Using this scale, let us introduce some function spaces related to the parabolic
problem under consideration. As before, $s\in\mathbb{R}$ and
$\varphi\in\mathcal{M}$. We put
$$
H^{s,s\gamma,\varphi}_{+}(\mathbb{R}^{2}):=\bigl\{w\in
H^{s,s\gamma,\varphi}(\mathbb{R}^{2}):\,
\mathrm{supp}\,w\subseteq\mathbb{R}\times[0,\infty)\bigr\}.
$$
The linear space $H^{s,s\gamma,\varphi}_{+}(\mathbb{R}^{2})$ is endowed with the
inner product and norm in $H^{s,s\gamma,\varphi}(\mathbb{R}^{2})$. The space
$H^{s,s\gamma,\varphi}_{+}(\mathbb{R}^{2})$ is complete (Hilbert) because of the
continuous embedding
$$
H^{s,s\gamma,\varphi}(\mathbb{R}^{2})\hookrightarrow\mathcal{S}'(\mathbb{R}^{2}).
$$

Next, we define the normed linear space
\begin{equation}\label{f3.7}
\begin{gathered}
H^{s,s\gamma,\varphi}_{+}(\Omega):=\bigl\{w\!\upharpoonright\Omega:\,w\in
H^{s,s\gamma,\varphi}_{+}(\mathbb{R}^{2})\bigr\},\\
\|u\|_{H^{s,s\gamma,\varphi}_{+}(\Omega)}:=
\inf\bigl\{\|w\|_{H^{s,s\gamma,\varphi}(\mathbb{R}^{2})}:\,w\in
H^{s,s\gamma,\varphi}_{+}(\mathbb{R}^{2}),\;\;w=u\;\,\mbox{in}\;\,\Omega\bigr\},
\end{gathered}
\end{equation}
with $u\in H^{s,s\gamma,\varphi}_{+}(\Omega)$. In other words,
$H^{s,s\gamma,\varphi}_{+}(\Omega)$ is the factor space of the space
$H^{s,s\gamma,\varphi}_{+}(\mathbb{R}^{2})$ by its subspace
\begin{equation}\label{f3.8}
H^{s,s\gamma,\varphi}_{Q}(\mathbb{R}^{2}):=\bigl\{w\in
H^{s,s\gamma,\varphi}(\mathbb{R}^{2}):\, \mathrm{supp}\,w\subseteq
Q:=\mathbb{R}\times[0,\infty)\setminus\Omega\bigr\}.
\end{equation}
Hence, the space $H^{s,s\gamma,\varphi}_{+}(\Omega)$ is Hilbert. The norm
\eqref{f3.7} is induced by the inner product
$$
(u_{1},u_{2})_{H^{s,s\gamma,\varphi}_{+}(\Omega)}:= (w_{1}-\Upsilon
w_{1},w_{2}-\Upsilon w_{2})_{H^{s,s\gamma,\varphi}(\mathbb{R}^{2})},
$$
where $w_{j}\in H^{s,s\gamma,\varphi}(\mathbb{R}^{2})$, $w_{j}=u_{j}$ in $\Omega$
for each $j\in\{1,\,2\}$, and $\Upsilon$ is the orthogonal projector of the space
$H^{s,s\gamma,\varphi}_{+}(\mathbb{R}^{2})$ onto its subspace \eqref{f3.8}.

Note that both Hilbert spaces $H^{s,s\gamma,\varphi}_{+}(\mathbb{R}^{2})$ and
$H^{s,s\gamma,\varphi}_{+}(\Omega)$ are separable. The set
$C^{\infty}_{0}(\mathbb{R}\times(0,\infty))$ is dense in
$H^{s,s\gamma,\varphi}_{+}(\mathbb{R}^{2})$ \cite[Lemma 3.3]{VolevichPaneah65}; this
implies the density of $C^{\infty}_{+}(\overline{\Omega})$ in
$H^{s,s\gamma,\varphi}_{+}(\Omega)$.

It remains to introduce the function spaces in which the right-hand sides of the
boundary-value conditions \eqref{f2.2} and \eqref{f2.3} are considered. Let
$s\in\mathbb{R}$ and $\varphi\in\mathcal{M}$. By definition, the linear space
$H^{s,\varphi}(\mathbb{R})$ consists of all tempered distributions
$h\in\mathcal{S}'(\mathbb{R})$ such that their Fourier transform $\widehat{h}$ is
locally Lebesgue integrable over $\mathbb{R}$ and satisfies the condition
$$
\int\limits_{-\infty}^{\infty}\langle\xi\rangle^{2s}\,\varphi^{2}(\langle\xi\rangle)\,
|\widehat{h}(\xi)|^{2}\,d\xi<\infty.
$$
Here, as usual, $\langle\xi\rangle:=(1+|\xi|^{2})^{1/2}$ is the smooth modulus of
$\xi\in\mathbb{R}$. The space $H^{s,\varphi}(\mathbb{R})$ is endowed with the inner
product
$$
(h_{1},h_{2})_{H^{s,\varphi}(\mathbb{R})}:=\int\limits_{-\infty}^{\infty}
\langle\xi\rangle^{2s}\,\varphi^{2}(\langle\xi\rangle)\,
\widehat{h_{1}}(\xi)\,\overline{\widehat{h_{2}}(\xi)}\,d\xi,
$$
where $h_{1},h_{2}\in H^{s,\varphi}(\mathbb{R})$. It induces the norm
$$
\|h\|_{H^{s,\varphi}(\mathbb{R})}:=(h,h)_{H^{s,\varphi}(\mathbb{R})}^{1/2}.
$$

Notice that $H^{s,\varphi}(\mathbb{R})$ is the inner product H\"ormander space
$\mathcal{B}_{2,\mu}(\mathbb{R})$ corresponding to the function parameter
$\mu(\xi):=\langle\xi\rangle^{s}\varphi(\langle\xi\rangle)$ of $\xi\in\mathbb{R}$
(see the references \cite{Hormander63, Hormander83, VolevichPaneah65} mentioned
above). Therefore $H^{s,\varphi}(\mathbb{R})$ is a separable Hilbert space embedded
continuously in $\mathcal{S}'(\mathbb{R})$, and the set $C^{\infty}_{0}(\mathbb{R})$
is dense in $H^{s,\varphi}(\mathbb{R})$.

If $\varphi(r)\equiv1$, then $H^{s,\varphi}(\mathbb{R})$ becomes the Sobolev space
$H^{s}(\mathbb{R})$ of order $s$. Analogously to \eqref{f3.5}, we have the
continuous and dense embedding
\begin{equation}\label{f3.9}
H^{s_{1}}(\mathbb{R})\hookrightarrow H^{s,\varphi}(\mathbb{R})\hookrightarrow
H^{s_{0}}(\mathbb{R})\quad\mbox{whenever}\quad
s_{0}<s<s_{1},\quad\varphi\in\mathcal{M}.
\end{equation}

The class of Hilbert function spaces
\begin{equation}\label{f3.10}
\bigl\{H^{s,\varphi}(\mathbb{R}):\,s\in\mathbb{R},\,\varphi\in\mathcal{M}\,\bigr\}
\end{equation}
is called the refined Sobolev scale over $\mathbb{R}$ (see \cite[Sec.
1.3.3]{MikhailetsMurach10} and \cite[Sec. 3.2]{12BJMA2}).

Using this scale, introduce one-dimensional analogs of the spaces considered above.
We let
$$
H^{s,\varphi}_{+}(\mathbb{R}):=\bigl\{h\in H^{s,\varphi}(\mathbb{R}):\,
\mathrm{supp}\,h\subseteq[0,\infty)\bigr\}
$$
and interpret $H^{s,\varphi}_{+}(\mathbb{R})$ as a (closed) subspace of
$H^{s,\varphi}(\mathbb{R})$. Then define the normed linear space
\begin{gather*}
H^{s,\varphi}_{+}(0,\tau):=\bigl\{h\!\upharpoonright(0,\tau):\,h\in
H^{s,\varphi}_{+}(\mathbb{R})\bigr\},\\
\|v\|_{H^{s,\varphi}_{+}(0,\tau)}:=
\inf\bigl\{\|h\|_{H^{s,\varphi}(\mathbb{R})}:\,h\in
H^{s,\varphi}_{+}(\mathbb{R}),\;\;h=v\;\,\mbox{in}\;(0,\tau)\bigr\},
\end{gather*}
with $v\in H^{s,\varphi}_{+}(0,\tau)$. This space is Hilbert as it is the factor
space of $H^{s,\varphi}_{+}(\mathbb{R})$ by
\begin{equation}\label{f3.11}
\bigl\{h\in H^{s,\varphi}(\mathbb{R}):\,
\mathrm{supp}\,h\subseteq\{0\}\cup[\tau,\infty)\bigr\}.
\end{equation}
Both Hilbert spaces $H^{s,\varphi}_{+}(\mathbb{R})$ and $H^{s,\varphi}_{+}(0,\tau)$
are separable. The set $C^{\infty}_{0}(0,\infty)$ is dense in
$H^{s,\varphi}_{+}(\mathbb{R})$ \cite[Lemma 3.3]{VolevichPaneah65} so that
$C^{\infty}_{+}[0,\tau]$ is dense in $H^{s,\varphi}_{+}(0,\tau)$.

In the Sobolev case of $\varphi\equiv1$ we will omit the index $\varphi$ in the
designations of the spaces introduced.

We finish this section with the following observation.

\begin{remark}\label{rem3.3} \rm
According to the Sobolev embedding theorem and the above-mentioned result by
Agranovich and Vishik \cite[theorem 11.1]{AgranovichVishik64}, we obtain the
equalities
\begin{gather*}
C^{\infty}_{+}(\overline{\Omega})=
\bigcap_{\substack{\sigma>\sigma_0,\\\sigma/(2b)\in\mathbb{Z}}}
H^{\sigma,\sigma/(2b)}_{+}(\Omega),\\
C^{\infty}_{+}(\overline{\Omega})\times\bigl(C^{\infty}_{+}[0,\tau]\bigr)^{2m}
=\bigcap_{\substack{\sigma>\sigma_0,\\\sigma/(2b)\in\mathbb{Z}}}
(A,B)\bigl(H^{\sigma,\sigma/(2b)}_{+}(\Omega)\bigr).
\end{gather*}
It follows from them that the mapping \eqref{f2.5} sets a one-to-one correspondence
between the spaces $C^{\infty}_{+}(\overline{\Omega})$ and
$C^{\infty}_{+}(\overline{\Omega})\times\bigl(C^{\infty}_{+}[0,\tau]\bigr)^{2m}$.
\end{remark}

\section{Abstract auxiliary results}\label{sec4}

Here we recall the definition of the interpolation with a function parameter in the
case of general Hilbert spaces and then discuss the interpolation properties which
will be used in Section~\ref{sec5}. We follow the monograph \cite[Sec.
1.1]{MikhailetsMurach10} (also see \cite[Sec.~2]{08MFAT1}). It is sufficient to
restrict ourselves to separable complex Hilbert spaces.

Let $X:=[X_{0},X_{1}]$ be an ordered couple of separable complex Hilbert spaces such
that the continuous and dense embedding $X_{1}\hookrightarrow X_{0}$ holds. This
couple is said to be admissible. For $X$ there exists an isometric isomorphism
$J:X_{1}\leftrightarrow X_{0}$ such that $J$ is a self-adjoint and positive operator
on $X_{0}$ with the domain $X_{1}$. The operator $J$ is uniquely determined by the
couple $X$ and is called the generating operator for~$X$.

Let $\psi\in\mathcal{B}$, where $\mathcal{B}$ denotes the set of all Borel
measurable functions $\psi:(0,\infty)\rightarrow(0,\infty)$ such that $\psi$ is
bounded on each compact interval $[a,b]$, with $0<a<b<\infty$, and that $1/\psi$ is
bounded on every semiaxis $[a,\infty)$, with $a>0$.

Consider the operator $\psi(J)$, which is defined (and positive) in $X_{0}$ as the
Borel function $\psi$ of $J$. Denote by $[X_{0},X_{1}]_{\psi}$ or simply by
$X_{\psi}$ the domain of the operator $\psi(J)$ endowed with the inner product
$$
(u_{1},u_{2})_{X_{\psi}}:=(\psi(J)u_{1},\psi(J)u_{2})_{X_{0}}.\quad
$$
It induces the norm $\|u\|_{X_{\psi}}:=\|\psi(J)u\|_{X_{0}}$. The space $X_{\psi}$
is Hilbert and separable.

A function $\psi\in\mathcal{B}$ is called an interpolation parameter if the
following condition is fulfilled for all admissible couples $X=[X_{0},X_{1}]$ and
$Y=[Y_{0},Y_{1}]$ of Hilbert spaces and for an arbitrary linear mapping $T$ given on
$X_{0}$: if the restriction of $T$ to $X_{j}$ is a bounded operator
$T:X_{j}\rightarrow Y_{j}$ for each $j\in\{0,1\}$, then the restriction of $T$ to
$X_{\psi}$ is also a bounded operator $T:X_{\psi}\rightarrow Y_{\psi}$.

If $\psi$ is an interpolation parameter, then we say that the Hilbert space
$X_{\psi}$ is obtained by the interpolation with the function parameter $\psi$ of
the couple $X=[X_{0},X_{1}]$ (or, in other words, between the spaces $X_{0}$ and
$X_{1}$). In this case the dense and continuous embeddings $X_{1}\hookrightarrow
X_{\psi}\hookrightarrow X_{0}$ are valid.

It is known that a function $\psi\in\mathcal{B}$ is an interpolation parameter if
and only if $\psi$ is pseudoconcave in a neighbourhood of $\infty$, i.e. there is a
concave positive function $\psi_{1}(r)$ of $r\gg1$ such that both the functions
$\psi/\psi_{1}$ and $\psi_{1}/\psi$ are bounded on some neighbourhood of $\infty$.
This criterion follows from J.~Peetre's description of all interpolation functions
for the weighted $L_{p}(\mathbb{R}^{n})$-type spaces (see \cite[Theorem
5.4.4]{BerghLefstrem76}). The corresponding proof is given in \cite[Sec.
1.1.9]{MikhailetsMurach10}.

For us, it is important the next consequence of this criterion \cite[Theorem
1.11]{MikhailetsMurach10}.

\begin{proposition}\label{prop4.1}
Suppose that a function $\psi\in\mathcal{B}$ varies regularly of index $\theta$ at
infinity, with $0<\theta<1$, i.e.
$$
\lim_{r\rightarrow\infty}\;\frac{\psi(\lambda r)}{\psi(r)}=
\lambda^{\theta}\quad\mbox{for each}\quad\lambda>0.
$$
Then $\psi$ is an interpolation parameter.
\end{proposition}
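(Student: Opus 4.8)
The plan is to reduce everything to the pseudoconcavity criterion quoted just above the statement: since $\psi\in\mathcal{B}$ by hypothesis, it suffices to show that $\psi$ is pseudoconcave in a neighbourhood of $\infty$, i.e.\ to produce a positive concave function $\psi_{1}(r)$ of $r\gg1$ such that both $\psi/\psi_{1}$ and $\psi_{1}/\psi$ are bounded near $\infty$. Thus the whole proposition becomes the purely Karamata-theoretic assertion that a function varying regularly of index $\theta\in(0,1)$ at infinity is pseudoconcave near $\infty$; by the criterion it is then an interpolation parameter.

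To construct the concave majorant I would first pass to the auxiliary function $h(r):=\psi(r)/r$, which is measurable and positive on $[1,\infty)$ and varies regularly of the negative index $\theta-1\in(-1,0)$; in particular $h(r)\to0$ and, using that $\psi\in\mathcal{B}$ is bounded on compacts, $h$ is bounded on every ray, so $\bar h(r):=\sup_{u\ge r}h(u)$ is finite, positive and nonincreasing. By the Potter-type two-sided estimates for regularly varying functions (Seneta's monograph, cited above) there are $R\ge1$ and $C\ge1$ with $h(u)\le C\,h(v)$ whenever $u\ge v\ge R$, whence $h(r)\le\bar h(r)\le C\,h(r)$ for $r\ge R$, i.e.\ $\bar h\asymp h$ on $[R,\infty)$. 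I would then set
\[
\psi_{1}(r):=1+\int_{R}^{r}\bar h(u)\,du,\qquad r\ge R,
\]
which is positive, increasing and concave on $[R,\infty)$ because $\bar h$ is positive and nonincreasing. To check $\psi_{1}\asymp\psi$ near $\infty$: from $\bar h\asymp h$ we get $\int_{R}^{r}\bar h(u)\,du\asymp\int_{R}^{r}u^{-1}\psi(u)\,du$, and Karamata's theorem on integrals of regularly varying functions (applicable since $u^{-1}\psi(u)$ varies regularly of index $\theta-1>-1$) gives $\int_{R}^{r}u^{-1}\psi(u)\,du\sim\theta^{-1}\psi(r)$ as $r\to\infty$; finally $\psi(r)\to\infty$ because $\theta>0$, so the additive constant and the lower limit of integration are asymptotically negligible and $\psi_{1}(r)\asymp\psi(r)$. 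Hence $\psi$ is pseudoconcave near $\infty$ and the criterion finishes the proof.

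The only genuinely nontrivial ingredients are the two classical facts about regular variation invoked above — the uniform (Potter) bound that yields $\bar h\asymp h$, and Karamata's integral theorem identifying $\int^{r}u^{-1}\psi(u)\,du$ with $\theta^{-1}\psi(r)$ up to asymptotic equivalence — and I expect the verification that the constructed $\psi_{1}$ really is comparable to $\psi$ to be the main point requiring care; everything else is a direct consequence of the pseudoconcavity criterion. An alternative, slightly heavier route would replace this explicit construction by the smooth variation theorem: $\psi$ is asymptotically equivalent to a $C^{\infty}$ regularly varying function $\psi_{1}$ with $r^{2}\psi_{1}''(r)/\psi_{1}(r)\to\theta(\theta-1)<0$, so that $\psi_{1}$ is automatically concave for large $r$, again delivering pseudoconcavity of $\psi$ and hence the claim.
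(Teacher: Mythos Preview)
Your proposal is correct and follows exactly the route the paper indicates: reduce to the pseudoconcavity criterion stated immediately before the proposition. The paper itself supplies no proof of Proposition~\ref{prop4.1}, recording it only as a consequence of that criterion with a citation to \cite[Theorem~1.11]{MikhailetsMurach10}; your explicit construction of the concave comparison function $\psi_{1}$ (via the monotone envelope $\bar h$ and Karamata's integral theorem) fills in precisely the argument the paper delegates to that reference.
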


\begin{remark}\label{rem4.2} \rm
In the case of power functions this proposition leads us to the classical result by
J.-L.~Lions and S.~G.~Krein, which consists in that the function $\psi(r)\equiv
r^{\theta}$ is an interpolation parameter whenever $0<\theta<1$. Here the exponent
$\theta$ is regarded as a number parameter of the interpolation.
\end{remark}

At the end of this section we formulate two properties of the
interpolation; they will be used in our proofs. The first of them
enables us to reduce the interpolation of subspaces or factor
spaces to the interpolation of initial spaces (see
\cite[Sec.~1.1.6]{MikhailetsMurach10} and
\cite[Sec.~1.17]{Triebel95}). Note that subspaces are assumed to
be closed and that we generally consider nonorthogonal projectors
onto subspaces.

\begin{proposition}\label{prop4.3}
Let $X=[X_{0},X_{1}]$ be an admissible couple of Hilbert spaces,
and let $Y_{0}$ be a subspace of $X_{0}$. Then $Y_{1}:=X_{1}\cap
Y_{0}$ is a subspace of $X_{1}$. Suppose that there exists a
linear mapping $P:X_{0}\rightarrow X_{0}$ such that $P$ is a
projector of the space $X_{j}$ onto its subspace $Y_{j}$ for every
$j\in\{0,\,1\}$. Then the couples $[Y_{0},Y_{1}]$ and
$[X_{0}/Y_{0},X_{1}/Y_{1}]$ are admissible, and
$$
\begin{aligned}
\ [Y_{0},Y_{1}]_{\psi} & = X_{\psi}\cap Y_{0},
\\
[X_{0}/Y_{0},X_{1}/Y_{1}]_{\psi} & = X_{\psi}/(X_{\psi}\cap Y_{0})
\end{aligned}
$$
with equivalence of norms. Here $\psi\in\mathcal{B}$ is an arbitrary interpolation
parameter.
\end{proposition}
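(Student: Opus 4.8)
The plan is to treat the two assertions in turn, deriving the factor-space statement from the subspace statement, and throughout to exploit the fact that the generating operator and the functional calculus behave well under restriction once a compatible projector is available. First I would verify the preliminary claims that do not involve interpolation: that $Y_1=X_1\cap Y_0$ is a closed subspace of $X_1$ (it is closed because the embedding $X_1\hookrightarrow X_0$ is continuous and $Y_0$ is closed in $X_0$), that the embedding $Y_1\hookrightarrow Y_0$ is continuous and dense (density uses that $P$ maps $X_1$ onto $Y_1$ and is bounded on $X_0$: given $y\in Y_0$, approximate it by $x_k\in X_1$ in $X_0$, then $Px_k\in Y_1$ and $Px_k\to Py=y$), and similarly that $X_0/Y_0$ and $X_1/Y_1$ form an admissible couple, with $X_1/Y_1\hookrightarrow X_0/Y_0$ continuous and dense. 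This makes all four interpolation spaces in the statement well defined.

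For the subspace identity $[Y_0,Y_1]_\psi=X_\psi\cap Y_0$ with equivalence of norms, the key step is to apply the interpolation property of $\psi$ (from the definition in Section~\ref{sec4}) twice, to the mapping $P$ itself. Viewing $P$ as a linear map on $X_0$ whose restriction to $X_j$ is a bounded operator $X_j\to X_j$ for $j\in\{0,1\}$, we get that $P:X_\psi\to X_\psi$ is bounded; since $P$ is a projector onto $Y_0$ at the levels $j=0$ and $j=1$, an approximation argument shows $P$ is a projector of $X_\psi$ onto $X_\psi\cap Y_0$, and in particular $X_\psi\cap Y_0$ is a closed subspace of $X_\psi$. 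Now apply the interpolation property to the two mutually inverse inclusions/corestrictions between the couples $[Y_0,Y_1]$ and $[X_0,X_1]$: the inclusion $\iota:[Y_0,Y_1]\hookrightarrow[X_0,X_1]$ is bounded at each endpoint, hence $\iota:[Y_0,Y_1]_\psi\to X_\psi$ is bounded with image in $X_\psi\cap Y_0$; conversely $P$, regarded as a map $[X_0,X_1]\to[Y_0,Y_1]$, is bounded at each endpoint, hence $P:X_\psi\to[Y_0,Y_1]_\psi$ is bounded, and its restriction to $X_\psi\cap Y_0$ is the identity. These two bounded maps are inverse to each other as maps between $[Y_0,Y_1]_\psi$ and $X_\psi\cap Y_0$, which yields the set-theoretic equality together with the equivalence of norms (the Banach space structures being identified by two mutually inverse bounded operators).

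The factor-space identity $[X_0/Y_0,X_1/Y_1]_\psi=X_\psi/(X_\psi\cap Y_0)$ then follows by the same bootstrap applied to the complementary projector $Q:=I-P$, whose range $Z_j:=(I-P)X_j$ is a topological complement of $Y_j$ in $X_j$ and is naturally isomorphic (as a Banach space) to $X_j/Y_j$ via the quotient map. By the subspace part already proved (applied to $Q$ in place of $P$), $[Z_0,Z_1]_\psi=X_\psi\cap Z_0$ with equivalence of norms; transporting through the canonical isomorphisms $Z_j\cong X_j/Y_j$ and $X_\psi\cap Z_0\cong X_\psi/(X_\psi\cap Y_0)$ gives the claim. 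The main obstacle I expect is not any single hard estimate but the careful handling of norm equivalences: at no point can one claim an isometry, only a boundedness-plus-boundedness-of-inverse argument, so one must be disciplined in tracking that each identification of spaces is effected by a pair of mutually inverse bounded linear maps (equivalently, invoke the closed graph / open mapping theorem to upgrade a continuous bijection to an isomorphism), and one must make sure the projector $P$, which is only assumed bounded on $X_0$ and on $X_1$, really does descend to a bounded projector on $X_\psi$ — which is exactly where the hypothesis that $\psi$ is an interpolation parameter is used.
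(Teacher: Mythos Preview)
The paper does not actually prove Proposition~\ref{prop4.3}; it merely states the result and cites \cite[Sec.~1.1.6]{MikhailetsMurach10} and \cite[Sec.~1.17]{Triebel95} for the proof. Your plan is correct and is essentially the standard argument found in those references: one uses the interpolation property of $\psi$ applied to the inclusion $\iota:[Y_0,Y_1]\to[X_0,X_1]$ and to the projector $P:[X_0,X_1]\to[Y_0,Y_1]$ to produce mutually inverse bounded maps between $[Y_0,Y_1]_\psi$ and $X_\psi\cap Y_0$, and then passes to the quotient case via the complementary projector $Q=I-P$. In Triebel's language this is exactly the ``method of retractions and coretractions'' (the inclusion is the coretraction, $P$ the retraction), so your approach coincides with the cited proofs.

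One small point worth making explicit when you write it up: to transport the subspace result to the quotient result you use the identification $X_\psi\cap Z_0\cong X_\psi/(X_\psi\cap Y_0)$, which relies on knowing that $P$ is a bounded projector of $X_\psi$ onto $X_\psi\cap Y_0$ (so that $X_\psi=(X_\psi\cap Y_0)\oplus(X_\psi\cap Z_0)$ topologically). You essentially say this in passing, but it is the hinge of the quotient argument and deserves a sentence of its own.
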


The second property reduces the interpolation of direct sums of Hilbert spaces to
the interpolation of their summands.

\begin{proposition}\label{prop4.4}
Let $[X_{0}^{(j)},X_{1}^{(j)}]$, with $j=1,\ldots,p$, be a finite collection of
admissible couples of Hilbert spaces. Then
$$
\biggl[\,\bigoplus_{j=1}^{p}X_{0}^{(j)},\,\bigoplus_{j=1}^{p}X_{1}^{(j)}\biggr]_{\psi}=\,
\bigoplus_{j=1}^{p}\bigl[X_{0}^{(j)},\,X_{1}^{(j)}\bigr]_{\psi}
$$
with equality of norms. Here $\psi\in\mathcal{B}$ is an arbitrary interpolation
parameter.
\end{proposition}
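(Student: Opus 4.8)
The plan is to prove the identity at the level of generating operators and the Borel functional calculus, since by definition $X_{\psi}$ is the domain of $\psi(J)$ equipped with the graph-type inner product of $\psi(J)$. Write $X^{(j)}:=[X_{0}^{(j)},X_{1}^{(j)}]$ and let $J_{j}$ be the generating operator of $X^{(j)}$, so that $J_{j}$ is a self-adjoint positive operator in $X_{0}^{(j)}$ with domain $X_{1}^{(j)}$ realizing an isometric isomorphism $J_{j}:X_{1}^{(j)}\leftrightarrow X_{0}^{(j)}$. Throughout, the direct sum $\bigoplus_{j}X_{k}^{(j)}$, with $k\in\{0,1\}$, carries the Hilbert norm $\|(u_{1},\dots,u_{p})\|^{2}=\sum_{j}\|u_{j}\|_{X_{k}^{(j)}}^{2}$; by associativity of the orthogonal direct sum the computation is uniform in $p$, so it suffices to carry it out for general $p$ at once.

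First I would check that the couple on the left is admissible. The embedding $\bigoplus_{j}X_{1}^{(j)}\hookrightarrow\bigoplus_{j}X_{0}^{(j)}$ is continuous and dense, because each component embedding $X_{1}^{(j)}\hookrightarrow X_{0}^{(j)}$ is, and a finite direct sum of dense subspaces is dense. Next I would identify the generating operator of this couple as the orthogonal sum $J:=\bigoplus_{j}J_{j}$, acting by $J(u_{1},\dots,u_{p})=(J_{1}u_{1},\dots,J_{p}u_{p})$. This $J$ is self-adjoint and positive in $\bigoplus_{j}X_{0}^{(j)}$ because each $J_{j}$ is; its domain is $\bigoplus_{j}\mathrm{dom}\,J_{j}=\bigoplus_{j}X_{1}^{(j)}$; and the identity $\|Ju\|^{2}=\sum_{j}\|J_{j}u_{j}\|_{X_{0}^{(j)}}^{2}=\sum_{j}\|u_{j}\|_{X_{1}^{(j)}}^{2}=\|u\|^{2}$ shows it maps $\bigoplus_{j}X_{1}^{(j)}$ isometrically onto $\bigoplus_{j}X_{0}^{(j)}$. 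By the uniqueness of the generating operator recalled in the text, $J$ is exactly the generating operator of the left-hand couple.

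The decisive step is then the Borel functional calculus for the block-diagonal operator $J$. Since the spectral measure of an orthogonal sum of self-adjoint operators is the orthogonal sum of the spectral measures of the summands, any Borel function acts componentwise, so $\psi(J)=\bigoplus_{j}\psi(J_{j})$ with $\mathrm{dom}\,\psi(J)=\bigoplus_{j}\mathrm{dom}\,\psi(J_{j})$. Recalling that $X_{\psi}=\mathrm{dom}\,\psi(J)$ and $[X_{0}^{(j)},X_{1}^{(j)}]_{\psi}=\mathrm{dom}\,\psi(J_{j})$, this gives equality of the underlying spaces, while
$$
\|u\|_{X_{\psi}}^{2}=\|\psi(J)u\|^{2}=\sum_{j=1}^{p}\|\psi(J_{j})u_{j}\|_{X_{0}^{(j)}}^{2}=\sum_{j=1}^{p}\|u_{j}\|_{[X_{0}^{(j)},X_{1}^{(j)}]_{\psi}}^{2}
$$
yields equality of norms, as asserted. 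I expect the only point requiring care to be the verification that the Borel calculus commutes with the orthogonal direct sum; this is a standard consequence of the spectral theorem and, incidentally, uses nothing beyond $\psi\in\mathcal{B}$, so that the interpolation-parameter hypothesis is not actually needed for the identity itself.
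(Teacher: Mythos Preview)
Your argument is correct and is the natural proof: identify the generating operator of the direct-sum couple as the block-diagonal $J=\bigoplus_{j}J_{j}$, then use that the Borel functional calculus commutes with orthogonal direct sums to get $\psi(J)=\bigoplus_{j}\psi(J_{j})$, from which equality of domains and norms is immediate. Your observation that the hypothesis ``$\psi$ is an interpolation parameter'' plays no role---only $\psi\in\mathcal{B}$ is used---is also accurate.

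Note, however, that the paper does not actually supply its own proof of Proposition~4.4; Section~4 merely recalls this result from the monograph \cite[Sec.~1.1]{MikhailetsMurach10}. Your proof is precisely the standard argument one finds there (or would reconstruct), so there is nothing to contrast: you have filled in what the paper cites.
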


\section{Proof of the main result}\label{sec5}

We will previously prove that the spaces appearing in \eqref{f2.6} can be obtained
by the interpolation with a function parameter between certain Sobolev spaces. Using
this interpolation we will deduce Main Theorem from the above-mentioned result by
Agranovich and Vishik.

In this section we suppose that
\begin{equation}\label{f5.1}
s,s_{0},s_{1}\in\mathbb{R},\quad
s_{0}<s<s_{1},\quad\mbox{and}\quad\varphi\in\mathcal{M}.
\end{equation}
Consider the function
\begin{equation}\label{f5.2}
\psi(r):=
\begin{cases}
\;r^{(s-s_{0})/(s_{1}-s_{0})}\,\varphi(r^{1/(s_{1}-s_{0})})&\text{for}\quad r\geq1, \\
\;\varphi(1) & \text{for}\quad0<r<1.
\end{cases}
\end{equation}
This function is an interpolation parameter by Proposition~\ref{prop4.1} because
$\psi$ varies regularly of index $\theta:=(s-s_{0})/(s_{1}-s_{0})$ at infinity, with
$0<\theta<1$. We will interpolate couples of Sobolev spaces with the function
parameter $\psi$.

We begin with anisotropic spaces and prove necessary interpolation formulas for the
spaces $H^{s,s\gamma,\varphi}(\mathbb{R}^{2})$,
$H^{s,s\gamma,\varphi}_{+}(\mathbb{R}^{2})$, and $H^{s,s\gamma,\varphi}_{+}(\Omega)$
deducing each next formula from the previous one. The corresponding results will be
formulated as lemmas.

\begin{lemma}\label{lem5.1}
On the assumption \eqref{f5.1} we have
\begin{equation}\label{f5.3}
H^{s,s\gamma,\varphi}(\mathbb{R}^{2})=\bigl[H^{s_{0},s_{0}\gamma}(\mathbb{R}^{2}),
H^{s_{1},s_{1}\gamma}(\mathbb{R}^{2})\bigr]_{\psi}
\end{equation}
with equality of norms.
\end{lemma}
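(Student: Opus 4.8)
The plan is to realize the equality~\eqref{f5.3} via the general interpolation machinery of Section~\ref{sec4}, the point being that all three spaces in~\eqref{f5.3} are defined as $L_2$-spaces with respect to weights that are functions of a single quantity $r_\gamma(\xi,\eta)$, so the interpolation reduces to an elementary identity between weight functions. First I would identify the generating operator for the admissible couple $X:=[H^{s_0,s_0\gamma}(\mathbb{R}^2),H^{s_1,s_1\gamma}(\mathbb{R}^2)]$; the continuous dense embedding $X_1\hookrightarrow X_0$ is the Sobolev case of~\eqref{f3.5}. Passing to the Fourier image, $X_0$ becomes $L_2(\mathbb{R}^2,\,r_\gamma^{2s_0}\,d\xi d\eta)$ and $X_1$ becomes $L_2(\mathbb{R}^2,\,r_\gamma^{2s_1}\,d\xi d\eta)$, and the operator $J$ of multiplication by $r_\gamma^{s_1-s_0}(\xi,\eta)$ is an isometric isomorphism $X_1\leftrightarrow X_0$ that is self-adjoint and positive on $X_0$ with domain $X_1$. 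Hence $J$ is the generating operator for $X$.

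Next I would compute $X_\psi=[X_0,X_1]_\psi$ using the Borel functional calculus. Since $J$ acts as multiplication by $r_\gamma^{s_1-s_0}(\xi,\eta)$, the operator $\psi(J)$ acts (on the Fourier side) as multiplication by $\psi\bigl(r_\gamma^{s_1-s_0}(\xi,\eta)\bigr)$. By the definition~\eqref{f5.2} of $\psi$, and because $r_\gamma(\xi,\eta)\geq1$ always (so that only the branch $r\geq1$ of~\eqref{f5.2} is relevant), we have
\begin{equation*}
\psi\bigl(r_\gamma^{s_1-s_0}(\xi,\eta)\bigr)
= r_\gamma^{s-s_0}(\xi,\eta)\,\varphi\bigl(r_\gamma(\xi,\eta)\bigr).
\end{equation*}
Therefore $\|w\|_{X_\psi}=\|\psi(J)w\|_{X_0}$ equals the square root of
$\int_{\mathbb{R}^2} r_\gamma^{2(s-s_0)}(\xi,\eta)\,\varphi^2(r_\gamma(\xi,\eta))\,r_\gamma^{2s_0}(\xi,\eta)\,|\widetilde w(\xi,\eta)|^2\,d\xi d\eta
=\int_{\mathbb{R}^2} r_\gamma^{2s}(\xi,\eta)\,\varphi^2(r_\gamma(\xi,\eta))\,|\widetilde w(\xi,\eta)|^2\,d\xi d\eta$,
which is exactly $\|w\|^2_{H^{s,s\gamma,\varphi}(\mathbb{R}^2)}$ by the definition in~\eqref{f3.3}. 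This gives the equality of sets $X_\psi=H^{s,s\gamma,\varphi}(\mathbb{R}^2)$ together with equality (not merely equivalence) of norms. Since $\psi$ is an interpolation parameter — as already noted, because it varies regularly of index $\theta=(s-s_0)/(s_1-s_0)\in(0,1)$ at infinity, so Proposition~\ref{prop4.1} applies — the space $X_\psi$ is legitimately obtained by interpolation, and~\eqref{f5.3} follows.

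I do not expect a serious obstacle here; the only points requiring a little care are verifying that $J$ is genuinely self-adjoint and positive with the correct domain (standard for multiplication operators, and the domain is $X_1$ precisely by the weight identity $r_\gamma^{s_1-s_0}\cdot r_\gamma^{s_0}=r_\gamma^{s_1}$), checking that $\psi\in\mathcal{B}$ and that the values of $\psi$ on $(0,1)$ in~\eqref{f5.2} are irrelevant because $r_\gamma\ge1$, and confirming that the boundedness and measurability conditions b), c) defining $\mathcal{M}$ translate into $\psi$ meeting the hypotheses of Proposition~\ref{prop4.1}. All of these are routine. This lemma is the base case from which the analogous formulas for $H^{s,s\gamma,\varphi}_+(\mathbb{R}^2)$ (a subspace) and $H^{s,s\gamma,\varphi}_+(\Omega)$ (a factor space) will be deduced via Proposition~\ref{prop4.3}.
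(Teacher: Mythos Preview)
Your proposal is correct and follows essentially the same approach as the paper: identify the generating operator $J$ for the admissible couple as (conjugation by Fourier transform of) multiplication by $r_\gamma^{s_1-s_0}$, compute $\psi(J)$ via the weight identity $\psi(r_\gamma^{s_1-s_0})=r_\gamma^{s-s_0}\varphi(r_\gamma)$, and read off the norm equality. The paper's proof additionally routes the norm identity through $C_0^\infty(\mathbb{R}^2)$ and then closes by density, but your direct argument on all of $X_\psi$ is equally valid and slightly more streamlined.
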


\begin{proof}
The couple of Sobolev spaces
$$
X:=\bigl[H^{s_{0},s_{0}\gamma}(\mathbb{R}^{2}),
H^{s_{1},s_{1}\gamma}(\mathbb{R}^{2})\bigr]
$$
is admissible in view of~\eqref{f3.5}. The generating operator for this couple is
given by the formula
$$
J:\,w\mapsto\mathcal{F}^{-1}[r_{\gamma}^{s_{1}-s_{0}}\,\mathcal{F}w\,],
\quad\mbox{with}\quad w\in H^{s_{1},s_{1}\gamma}(\mathbb{R}^{2}).
$$
This follows immediately from the definition of these spaces. Here $\mathcal{F}$
and $\mathcal{F}^{-1}$ stand for the operators of the direct and inverse Fourier
transform (in two variables) of tempered distributions given in $\mathbb{R}^{2}$.

Note that $J$ is reduced to the operator of multiplication by
$r_{\gamma}^{s_{1}-s_{0}}$ with the help of the Fourier transform considered as an
isometric isomorphism
$$
\mathcal{F}:\,H^{s_{0},s_{0}\gamma}(\mathbb{R}^{2})\leftrightarrow
L_{2}\bigl(\mathbb{R}^{2},r_{\gamma}^{2s_{0}}(\xi,\eta)d\xi d\eta\bigr).
$$
Hence $\mathcal{F}$ reduces $\psi(J)$ to the operator of multiplication by the
function
$$
\psi(r_{\gamma}^{s_{1}-s_{0}}(\xi,\eta))\equiv
r_{\gamma}^{s-s_{0}}(\xi,\eta)\,\varphi(r_{\gamma}(\xi,\eta)),
$$
in view of \eqref{f5.2}. Now for each $w\in C^{\infty}_{0}(\mathbb{R}^{2})$ we may
write the following:
\begin{align*}
\|w\|_{X_{\psi}}^{2}&=\|\psi(J)w\|_{H^{s_{0},s_{0}\gamma}(\mathbb{R}^{2})}^{2}\\
&=\int\limits_{-\infty}^{\infty}\,\int\limits_{-\infty}^{\infty}
|\psi(r_{\gamma}^{s_{1}-s_{0}}(\xi,\eta))\,(\mathcal{F}w)(\xi,\eta)|^{2}\,
r_{\gamma}^{2s_{0}}(\xi,\eta)d\xi
d\eta\\&=\|w\|_{H^{s,s\gamma,\varphi}(\mathbb{R}^{2})}.
\end{align*}
This implies the equality of spaces \eqref{f5.3} as $C^{\infty}_{0}(\mathbb{R}^{2})$
is dense in both of them. (Note that $C^{\infty}_{0}(\mathbb{R}^{2})$ is dense in
the second space denoted by $X_{\psi}$ because $C^{\infty}_{0}(\mathbb{R}^{n})$ is
dense in the space $H^{s_{1},s_{1}\gamma}(\mathbb{R}^{2})$ embedded continuously and
densely in $X_{\psi}$.)
\end{proof}

To apply this lemma to the interpolation between the subspaces
$H^{s_{0},s_{0}\gamma}_{+}(\mathbb{R}^{2})$ and
$H^{s_{1},s_{1}\gamma}_{+}(\mathbb{R}^{2})$ we need the following preparatory
result.

Let $\Pi$ be an open half-plain in $\mathbb{R}^2$ such that its boundary
$\partial\Pi$ is parallel to a certain coordinate axis. The anisotropic Sobolev
space $H^{s,s\gamma}(\Pi)$ is defined as follows
\begin{gather*}
H^{s,s\gamma}(\Pi):=\bigl\{w\!\upharpoonright\Pi:\,w\in
H^{s,s\gamma}(\mathbb{R}^{2})\bigr\},\\
\|v\|_{H^{s,s\gamma}(\Pi)}:= \inf\bigl\{\|w\|_{H^{s,s\gamma}(\mathbb{R}^{2})}:\,w\in
H^{s,s\gamma}(\mathbb{R}^{2}),\;\;w=v\;\,\mbox{in}\;\,\Pi\bigr\}.
\end{gather*}
This space is Hilbert.

\begin{lemma}\label{lem5.2}
Let numbers $k\in\mathbb{N}$ and $\varepsilon>0$ be arbitrarily chosen. There exists
a bounded linear operator $T_{\Pi}^{k,\varepsilon}:L_2(\Pi)\rightarrow
L_2(\mathbb{R}^2)$ that satisfies the following conditions:
\begin{itemize}
\item [(i)] The mapping $T_{\Pi}^{k,\varepsilon}$ is an extension operator; i.e.,
$T_{\Pi}^{k,\varepsilon}v=v$ in $\Pi$ for each $v\in L_2(\Pi)$.
\item [(ii)] If $s,s\gamma\in\mathbb{N}\cap[1,k]$, then the restriction of
$T_{\Pi}^{k,\varepsilon}$ to $H^{s,s\gamma}(\Pi)$ defines a bounded operator
\begin{equation*}
T_{\Pi}^{k,\varepsilon}\,:\,H^{s,s\gamma}(\Pi)\rightarrow
H^{s,s\gamma}(\mathbb{R}^{2}).
\end{equation*}
\item [(iii)] Let $E$ be an open interval (bounded or not) that lies on
$\partial\Pi$, and let $\nu$ be the unit vector of an inner normal to $\partial\Pi$
(with respect to $\Pi$). If a function $v\in L_2(\Pi)$ is equal to zero on the set
$\{x_{1}+x_{2}\nu:x_1\in E,\,0<x_{2}<\varepsilon\}$, then
$T_{\Pi}^{k,\varepsilon}v\equiv0$ on the set $\{x_{1}+x_{2}\nu:x_1\in
E,\,x_{2}<0\}$.
\end{itemize}
\end{lemma}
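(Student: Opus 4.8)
The plan is to build the operator $T_{\Pi}^{k,\varepsilon}$ by an explicit averaged-reflection formula of Hestenes--Lichtenstein--Babich type, adapted so as to respect supports (condition (iii)) and the anisotropic scaling (condition (ii)). After an affine change of variables and, if necessary, a relabelling of the coordinates, we may assume $\Pi=\{(x_1,x_2):x_2>0\}$ with $\nu=(0,1)$; the two cases for which coordinate axis is parallel to $\partial\Pi$ correspond to whether $x_2$ is the spatial variable $x$ or the temporal variable $t$, and only the meaning of the exponent in the anisotropic norm changes. For $v\in L_2(\Pi)$ set
\begin{equation*}
(T_{\Pi}^{k,\varepsilon}v)(x_1,x_2):=
\begin{cases}
v(x_1,x_2)&\text{if }x_2>0,\\[1mm]
\displaystyle\sum_{j=1}^{N}c_j\,\chi(x_2)\,v(x_1,-\lambda_j x_2)&\text{if }x_2<0,
\end{cases}
\end{equation*}
where $N:=k+1$, the multipliers $\lambda_j>0$ are chosen to accumulate near $0$ (say $\lambda_j:=\varepsilon/(2j)$), the coefficients $c_j$ solve the Vandermonde-type moment system $\sum_{j}c_j(-\lambda_j)^{r}=1$ for $r=0,1,\dots,k$ guaranteeing $C^{k}$-matching across $\{x_2=0\}$ of all (non-mixed) derivatives up to order $k$, and $\chi\in C^\infty(\mathbb{R})$ is a fixed cutoff with $\chi\equiv1$ near $0$ and $\mathrm{supp}\,\chi\subset(-\varepsilon,\varepsilon)$. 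The presence of $\chi$ and the smallness of the $\lambda_j$ localise the reflected piece to the slab $0<x_2<\varepsilon$ of $\Pi$; this is precisely what yields (iii), since if $v$ vanishes on $\{x_1\in E,\;0<x_2<\varepsilon\}$ then every term $v(x_1,-\lambda_j x_2)$ vanishes for $x_1\in E$, $-\varepsilon<x_2<0$, while $\chi(x_2)=0$ for $x_2\le-\varepsilon$.

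The first two conditions are then routine. Condition (i) is immediate from the definition. For (ii) I would estimate, for $s$ and $s\gamma$ positive integers not exceeding $k$, the three quantities $\|T v\|_{L_2}$, $\|D_{x}^{s}(Tv)\|_{L_2}$, $\|\partial_t^{s\gamma}(Tv)\|_{L_2}$ that appear in the equivalent norm \eqref{f3.4}. On the half-plane $\Pi$ these coincide with the corresponding norms of $v$; on the complementary half-plane the reflection $v(x_1,-\lambda_j x_2)$ changes each pure $x_2$-derivative of order $r$ by the harmless factor $(-\lambda_j)^r$ and leaves pure $x_1$-derivatives untouched, so a one-dimensional change of variables in $x_2$ plus Leibniz (to handle derivatives hitting $\chi$) gives $\|Tv\|_{H^{s,s\gamma}(\mathbb{R}^2)}\le C\|v\|_{H^{s,s\gamma}(\Pi)}$, with $C$ depending only on $k$, $\varepsilon$, and the fixed data $c_j,\lambda_j,\chi$; the moment conditions on the $c_j$ ensure that these derivatives, formed in the distributional sense across $\partial\Pi$, carry no singular part on $\{x_2=0\}$. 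Because $\{x_2=0\}$ is parallel to a coordinate axis, the two ``pure'' directions $D_x$ and $\partial_t$ are exactly the tangential and normal ones (in one order or the other), which is why the construction handles the anisotropy without extra work.

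The main obstacle is reconciling the three requirements simultaneously: the reflection must be smoothing enough to gain $C^k$-regularity across the interface (forcing $N\ge k+1$ distinct dilations $\lambda_j$), yet all dilations must be small and coupled to the single parameter $\varepsilon$ so that the reflected mass stays inside the $\varepsilon$-slab, and the cutoff $\chi$ must then be inserted without destroying the matching of derivatives at $x_2=0$ — which is why $\chi$ is taken identically $1$ in a neighbourhood of $0$. One must also check that the bounds in (ii) are uniform over the admissible $(s,s\gamma)$, but since there are only finitely many integer pairs with $s,s\gamma\le k$ this uniformity is automatic once each individual estimate is established. Finally, a brief remark that the operator so constructed is bounded $L_2(\Pi)\to L_2(\mathbb{R}^2)$ (the case $s=s\gamma=0$, with the convention that no matching is needed) completes the statement; this operator will be used in the next lemma, together with Proposition~\ref{prop4.3}, to pass from the interpolation formula of Lemma~\ref{lem5.1} to the subspaces with support in a half-plane.
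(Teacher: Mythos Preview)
Your construction is essentially the paper's own: the authors also reduce to the half-plane $\{t>0\}$, define the extension by the Hestenes reflection $\chi_{\varepsilon}(t)\sum_{j=1}^{k+1}\lambda_j\,v(x,-t/j)$ for $t<0$ with a cutoff $\chi_{\varepsilon}$ vanishing for $t<-2\varepsilon/3$, and then cite Besov--Il'in--Nikol'ski\u{\i} for the anisotropic Sobolev boundedness in~(ii). One small slip in your parametrisation: the argument for~(iii) needs $-\lambda_j x_2<\varepsilon$ whenever $-\varepsilon<x_2<0$, i.e.\ $\lambda_j\le 1$, but your choice $\lambda_j=\varepsilon/(2j)$ gives $\lambda_1=\varepsilon/2>1$ once $\varepsilon>2$; simply taking fixed distinct $\lambda_j\in(0,1]$ (the paper uses $\lambda_j=1/j$) and letting only the cutoff carry the $\varepsilon$-dependence removes this restriction.
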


\begin{proof}
Without loss of generality we may restrict ourselves to the case when
$\Pi=\{(x,t):\,x\in\mathbb{R},\,t>0\}$. (The general situation is reduced to this
case by translation and reflection in the plain.) We construct the operator
$T_{\Pi}^{k,\varepsilon}$ with the help of the extension method by M.~R.~Hestenes
(see \cite[Sec. 9.9]{BesovIlinNikolskii78} or \cite[Sec. 2.9.1]{Triebel95}).

Namely, given a function $v:\overline{\Pi}\to\mathbb{C}$, let
$$
(T_{\Pi}^{k,\varepsilon}v)(x,t):=
\begin{cases}
\;v(x,t)\quad&\mbox{for}\quad x\in\mathbb{R},\;t\geq 0,\\
\;\chi_{\varepsilon}(t)\sum\limits_{j=1}^{k+1}\lambda_j\,v(x,-t/j)\quad&\mbox{for}\quad
x\in\mathbb{R},\;t<0.
\end{cases}
$$
Here the numbers $\lambda_1,\ldots,\lambda_k,\lambda_{k+1}$ are chosen so that
\begin{equation*}
\sum_{j=1}^{k+1}\lambda_j\left(-\frac{1}{j}\right)^\alpha=1, \quad
\alpha=0,1,\dots,k.
\end{equation*}
Moreover, $\chi_{\varepsilon}\in C^{\infty}(\mathbb{R})$ is a fixed function such
that $\chi_{\varepsilon}(t)=1$ if $t>-\varepsilon/3$ and that
$\chi_{\varepsilon}(t)=0$ if $t<-2\varepsilon/3$. Then $v\in C^{k}(\overline{\Pi})$
implies $T_{\Pi}^{(k)}v\in C^{k}(\mathbb{R}^{2})$.

Evidently, the mapping $v\mapsto T_{\Pi}^{k,\varepsilon}v$ defines a bounded linear
operator $T_{\Pi}^{k,\varepsilon}:L_2(\Pi)\to L_2(\mathbb{R}^2)$ that complies with
conditions (i) and (iii). According to \cite[Sec. 9.9]{BesovIlinNikolskii78} this
operator satisfies condition (ii) as well.
\end{proof}

\begin{lemma}\label{lem5.3}
In addition to \eqref{f5.1} suppose that all the numbers $s_{0}$, $s_{1}$,
$s_{0}\gamma$, and $s_{1}\gamma$ are positive integers. Then
\begin{align}\label{f5.4}
H^{s,s\gamma,\varphi}_{+}(\mathbb{R}^{2}) & =
\bigl[H^{s_{0},s_{0}\gamma}_{+}(\mathbb{R}^{2}),
H^{s_{1},s_{1}\gamma}_{+}(\mathbb{R}^{2})\bigr]_{\psi},\\
H^{s,s\gamma,\varphi}_{+}(\Omega) & = \bigl[H^{s_{0},s_{0}\gamma}_{+}(\Omega),
H^{s_{1},s_{1}\gamma}_{+}(\Omega)\bigr]_{\psi} \label{f5.5}
\end{align}
with equivalence of norms.
\end{lemma}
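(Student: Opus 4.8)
\textbf{Proof plan for Lemma \ref{lem5.3}.}
The plan is to derive \eqref{f5.4} and \eqref{f5.5} from Lemma \ref{lem5.1} by means of Proposition \ref{prop4.3}, i.e. by exhibiting, in each case, a single linear projector that acts simultaneously on both endpoint Sobolev spaces and has the appropriate subspace as its range. The key point is that $\psi$, being regularly varying of index $\theta\in(0,1)$, is an interpolation parameter (Proposition \ref{prop4.1}), so once the projectors are in place the interpolation formulas follow formally.

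First I would prove \eqref{f5.4}. Observe that $H^{s,s\gamma,\varphi}_{+}(\mathbb{R}^{2})$ is by definition the subspace of $H^{s,s\gamma,\varphi}(\mathbb{R}^{2})$ consisting of distributions supported in $\overline{\Pi}=\mathbb{R}\times[0,\infty)$, and likewise for the endpoint spaces with $\varphi\equiv1$. Apply Lemma \ref{lem5.2} with $\Pi=\{(x,t):t>0\}$, with $k$ large enough that $s_{0},s_{1},s_{0}\gamma,s_{1}\gamma\in\mathbb{N}\cap[1,k]$ and with any fixed $\varepsilon>0$, obtaining an extension operator $T:=T_{\Pi}^{k,\varepsilon}:L_{2}(\Pi)\to L_{2}(\mathbb{R}^{2})$ which is also bounded $H^{s_{j},s_{j}\gamma}(\Pi)\to H^{s_{j},s_{j}\gamma}(\mathbb{R}^{2})$ for $j=0,1$ by part (ii). Let $R$ denote restriction to $\Pi$; then $P:=\mathrm{id}-TR$ is a linear map on $H^{s_{0},s_{0}\gamma}(\mathbb{R}^{2})$. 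Since $T$ is a right inverse of $R$, we have $P^{2}=P$ and $\ker P$ is exactly the set of $w$ with $Rw=0$, i.e. $w$ supported in $\overline{Q}$ where $Q=\mathbb{R}\times(-\infty,0)$; by part (iii) with $E=\mathbb{R}$, $P$ also maps into the space of distributions supported in $\overline{\Pi}$. Hence $P$ is a projector of $H^{s_{j},s_{j}\gamma}(\mathbb{R}^{2})$ onto $H^{s_{j},s_{j}\gamma}_{+}(\mathbb{R}^{2})$ for both $j$, and $P$ is bounded on each endpoint space. Proposition \ref{prop4.3} applied to the couple $X=[H^{s_{0},s_{0}\gamma}(\mathbb{R}^{2}),H^{s_{1},s_{1}\gamma}(\mathbb{R}^{2})]$ with $Y_{0}=H^{s_{0},s_{0}\gamma}_{+}(\mathbb{R}^{2})$ then gives $[Y_{0},Y_{1}]_{\psi}=X_{\psi}\cap Y_{0}$, and by Lemma \ref{lem5.1} $X_{\psi}=H^{s,s\gamma,\varphi}(\mathbb{R}^{2})$, whose intersection with $Y_{0}$ is precisely $H^{s,s\gamma,\varphi}_{+}(\mathbb{R}^{2})$; this is \eqref{f5.4}, with equivalence of norms.

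For \eqref{f5.5} I would iterate the same device. The space $H^{s,s\gamma,\varphi}_{+}(\Omega)$ was defined as the factor space $H^{s,s\gamma,\varphi}_{+}(\mathbb{R}^{2})/H^{s,s\gamma,\varphi}_{Q}(\mathbb{R}^{2})$ with $Q=(\mathbb{R}\times[0,\infty))\setminus\Omega$, and similarly at the endpoints. Starting from the admissible couple $[H^{s_{0},s_{0}\gamma}_{+}(\mathbb{R}^{2}),H^{s_{1},s_{1}\gamma}_{+}(\mathbb{R}^{2})]$, which by \eqref{f5.4} interpolates to $H^{s,s\gamma,\varphi}_{+}(\mathbb{R}^{2})$, I need a projector onto the closed subspace of distributions supported in $\overline{Q}$ that is bounded on both endpoint spaces. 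Such a projector is built from two applications of Lemma \ref{lem5.2}: one for the half-plane $\{x<l\}$ (extending across $x=l$ from the strip $l-\varepsilon<x<l$) and one for the half-plane $\{x>0\}$ (extending across $x=0$), with $\varepsilon<l$; composing the corresponding operators $\mathrm{id}-T_{j}R_{j}$ produces a linear map that is idempotent, bounded on each endpoint space, and whose range is exactly the subspace supported in $\overline{Q}$ — here part (iii) of Lemma \ref{lem5.2} is what guarantees that extension across one face does not destroy vanishing near the other face. Then Proposition \ref{prop4.3}, second formula, gives $[X_{0}/Y_{0},X_{1}/Y_{1}]_{\psi}=X_{\psi}/(X_{\psi}\cap Y_{0})$, which upon substituting \eqref{f5.4} is exactly \eqref{f5.5}.

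I expect the main obstacle to be the bookkeeping in the second part: verifying that the composite of the two one-sided extension–restriction operators is genuinely a projector with the right kernel and range, and in particular that the support conditions interact correctly at the two vertical faces $x=0$ and $x=l$ of $\Omega$ (this is where the localized statement (iii) of Lemma \ref{lem5.2}, with its interval $E$ and the $\varepsilon$-collar, is essential). Everything else — admissibility of the couples, boundedness of the projectors, and the passage through Propositions \ref{prop4.1} and \ref{prop4.3} — is routine.
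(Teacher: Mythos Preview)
Your overall strategy---build projectors from extension operators and invoke Proposition~\ref{prop4.3}---is exactly the paper's, but the execution has two genuine gaps.

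For \eqref{f5.4} you take $\Pi=\{t>0\}$ and set $P=\mathrm{id}-TR$. With this choice the \emph{range} of $P$ (not the kernel, which you have swapped) is $\{w:Rw=0\}$, i.e.\ the distributions supported in $\{t\le 0\}$, the wrong half-plane. The paper instead takes $\Pi=\{t<0\}$; then $P=\mathrm{id}-TR$ kills the lower half-plane and projects onto $H_{+}^{s_j,s_j\gamma}(\mathbb{R}^{2})$ as required. Part~(iii) of Lemma~\ref{lem5.2} plays no role here; it is the identity $R(\mathrm{id}-TR)=0$ that gives the support conclusion.

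For \eqref{f5.5} the two $x$-half-planes $\{x<l\}$ and $\{x>0\}$ are not enough, and ``composing the operators $\mathrm{id}-T_jR_j$'' does not yield the right projector (in fact, by part~(iii) that composition annihilates anything supported in $\{x\le 0\}$). The set $Q$ contains the slab $\{0\le x\le l,\ t\ge\tau\}$; a distribution $w$ supported there lies in $H_{Q}^{s_j,s_j\gamma}(\mathbb{R}^{2})$, yet any operator built only from restrictions and extensions in the $x$-direction will satisfy $\Lambda w=w$ on $\{0<x<l\}$, hence $(\mathrm{id}-\Lambda)w=0\ne w$. You must also cut at $t=\tau$: the paper sets $\Lambda=T_{3}R_{3}T_{2}R_{2}T_{1}R_{1}$ with $\Pi_{1}=\{t<\tau\}$, $\Pi_{2}=\{x<l\}$, $\Pi_{3}=\{x>0\}$ (all with $\varepsilon=l$) and takes $P_{0}=\mathrm{id}-\Lambda$. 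Then $\Lambda w=w$ on $\Omega$ gives $\mathrm{ran}(P_{0})\subset H_{Q}$, while $w=0$ on $\Omega$ forces $\Lambda w=0$ (this last step is where condition~(iii) is genuinely used, to propagate vanishing across each successive face), so $P_{0}$ is the identity on $H_{Q}$.
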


\begin{proof}
First deduce \eqref{f5.4}. Let $\Pi:=\{(x,t):\,x\in\mathbb{R},t<0\}$, and let
$T_{\Pi}^{s_{1},1}$ be the extension operator from Lemma~\ref{lem5.2}. The mapping
$P:w\mapsto w-T^{s_{1},1}_{\Pi}(w\!\!\upharpoonright\!\Pi)$, where $w\in
L_{2}(\mathbb{R}^{2})$, defines the projector of the space
$H^{s_{j},s_{j}\gamma}(\mathbb{R}^{2})$ onto its subspace
$H^{s_{j},s_{j}\gamma}_{+}(\mathbb{R}^{2})$ for every $j\in\{0,\,1\}$. Therefore by
Proposition~\ref{prop4.3} and Lemma~\ref{lem5.1} we may write
\begin{align*}
\bigl[H^{s_{0},s_{0}\gamma}_{+}(\mathbb{R}^{2}),
H^{s_{1},s_{1}\gamma}_{+}(\mathbb{R}^{2})\bigr]_{\psi}&=
\bigl[H^{s_{0},s_{0}\gamma}(\mathbb{R}^{2}),
H^{s_{1},s_{1}\gamma}(\mathbb{R}^{2})\bigr]_{\psi}\cap
H^{s_{0},s_{0}\gamma}_{+}(\mathbb{R}^{2})\\
&=H^{s,s\gamma,\varphi}(\mathbb{R}^{2})\cap
H^{s_{0},s_{0}\gamma}_{+}(\mathbb{R}^{2})= H^{s,s\gamma,\varphi}_{+}(\mathbb{R}^{2})
\end{align*}
up to equivalence of norms. Formula \eqref{f5.4} is proved.

Now we will deduce \eqref{f5.5} from \eqref{f5.4}. To this end we construct a
certain projector $P_{0}$ of each space $H^{s_{j},s_{j}\gamma}_{+}(\mathbb{R}^{2})$,
with $j\in\{0,1\}$, onto its subspace $H^{s_{j},s_{j}\gamma}_{Q}(\mathbb{R}^{2})$
defined by \eqref{f3.8}. Consider the half-plains
\begin{gather*}
\Pi_1:=\{(x,t)\,:\,x\in\mathbb{R},\,t<\tau\},\\
\Pi_2:=\{(x,t)\,:\,x<l,\,t\in\mathbb{R}\},\\
\Pi_3:=\{(x,t)\,:\,x>0,\,t\in\mathbb{R}\}.
\end{gather*}
For every $\alpha\in\{1,2,3\}$, let $R_{\alpha}$ denote the restriction mapping
$w\mapsto w\!\upharpoonright\!\Pi_{\alpha}$, with $w\in L_{2}(\mathbb{R}^{2})$, and
let $T_{\alpha}$ denote the extension operator $T_{\Pi_{\alpha}}^{s_{1},l}$ from
Lemma~\ref{lem5.2}. Consider the mapping $P_{0}:\,w\mapsto w-\Lambda w$, with $w\in
H^{s_{0},s_{0}\gamma}_{+}(\mathbb{R}^{2})$ and $\Lambda
w:=T_{3}R_{3}T_{2}R_{2}T_{1}R_{1}w$. It follows from lemma~\ref{lem5.2} that $P_{0}$
is the projector required. Indeed, $P_{0}$ is a linear bounded operator on
$H^{s_{j},s_{j}\gamma}_{+}(\mathbb{R}^{2})$ for every $j\in\{0,1\}$. Moreover, if
$w=0$ in $\Omega$, then $\Lambda w=0$ in $\mathbb{R}^{2}$; therefore $P_{0}w=w$ for
each $w\in H^{s_{j},s_{j}\gamma}_{Q}(\mathbb{R}^{2})$.

Since the projector $P_{0}$ is given, we may apply Proposition~\ref{prop4.3} and
formula \eqref{f5.4} and write
\begin{align*}
&\bigl[H^{s_{0},s_{0}\gamma}_{+}(\Omega),
H^{s_{1},s_{1}\gamma}_{+}(\Omega)\bigr]_{\psi}]
\\
& \quad =\bigl[H^{s_{0},s_{0}\gamma}_{+}(\mathbb{R}^{2})/
H^{s_{0},s_{0}\gamma}_{Q}(\mathbb{R}^{2}),
H^{s_{1},s_{1}\gamma}_{+}(\mathbb{R}^{2})/
H^{s_{1},s_{1}\gamma}_{Q}(\mathbb{R}^{2})\bigr]_{\psi}
\\
& \quad =\bigl[H^{s_{0},s_{0}\gamma}_{+}(\mathbb{R}^{2}),
H^{s_{1},s_{1}\gamma}_{+}(\mathbb{R}^{2})\bigr]_{\psi}\big/
\bigl(\bigl[H^{s_{0},s_{0}\gamma}_{+}(\mathbb{R}^{2}),
H^{s_{1},s_{1}\gamma}_{+}(\mathbb{R}^{2})\bigr]_{\psi}\cap
H^{s_{0},s_{0}\gamma}_{Q}(\mathbb{R}^{2})\bigr)
\\
& \quad =H^{s,s\gamma,\varphi}_{+}(\mathbb{R}^{2})/
\bigl(H^{s,s\gamma,\varphi}_{+}(\mathbb{R}^{2})\cap
H^{s_{0},s_{0}\gamma}_{Q}(\mathbb{R}^{2})\bigr)=
H^{s,s\gamma,\varphi}_{+}(\mathbb{R}^{2})/H^{s,s\gamma,\varphi}_{Q}(\mathbb{R}^{2})
\\
& \quad =H^{s,s\gamma,\varphi}_{+}(\Omega)
\end{align*}
up to equivalence of norms. Formula \eqref{f5.5} is proved.
\end{proof}

It remains to prove a necessary interpolation formula for the space
$H^{s,\varphi}_{+}(0,\tau)$.

\begin{lemma}\label{lem5.4}
In addition to \eqref{f5.1} suppose that $s_{0}\geq0$. Then
\begin{equation}\label{f5.6}
H^{s,\varphi}_{+}(0,\tau)=
\bigl[H^{s_{0}}_{+}(0,\tau),H^{s_{1}}_{+}(0,\tau)\bigr]_{\psi}
\end{equation}
with equivalence of norms.
\end{lemma}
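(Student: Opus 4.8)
The plan is to follow the scheme of Lemmas~\ref{lem5.1} and \ref{lem5.3}: first interpolate the spaces over the whole line $\mathbb{R}$, then pass to the subspace $H^{s,\varphi}_{+}(\mathbb{R})$, and finally to the factor space $H^{s,\varphi}_{+}(0,\tau)$. At the last two stages the relevant subspace (respectively, quotient) is realized by means of a suitable projector and Proposition~\ref{prop4.3} is invoked, the interpolation parameter being $\psi$ from \eqref{f5.2}.

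\emph{Step 1 (the line).} First I would record the one-dimensional counterpart of Lemma~\ref{lem5.1}, namely
\begin{equation*}
H^{s,\varphi}(\mathbb{R})=\bigl[H^{s_{0}}(\mathbb{R}),H^{s_{1}}(\mathbb{R})\bigr]_{\psi}
\end{equation*}
with equality of norms. Its proof repeats that of Lemma~\ref{lem5.1} verbatim, with the anisotropic weight $r_{\gamma}(\xi,\eta)$ replaced by $\langle\xi\rangle$: the generating operator of the admissible couple $[H^{s_{0}}(\mathbb{R}),H^{s_{1}}(\mathbb{R})]$ is $h\mapsto\mathcal{F}^{-1}[\langle\xi\rangle^{s_{1}-s_{0}}\mathcal{F}h]$, and the Fourier transform reduces $\psi(J)$ to multiplication by $\langle\xi\rangle^{s-s_{0}}\varphi(\langle\xi\rangle)$. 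One may also cite \cite[Sec.~1.3.3]{MikhailetsMurach10}.

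\emph{Step 2 (the half-line).} Since $s_{0}\geq0$, for each $j\in\{0,1\}$ there is a bounded linear extension operator $\Lambda_{-}\colon H^{s_{j}}(-\infty,0)\to H^{s_{j}}(\mathbb{R})$ (a universal extension operator, e.g.\ \cite[Sec.~2.9.1]{Triebel95}, or the Hestenes reflection operator of Lemma~\ref{lem5.2} together with interpolation between $L_{2}$ and $H^{k}$, $k>s_{1}$ integer). Then $Pw:=w-\Lambda_{-}(w\!\upharpoonright\!(-\infty,0))$ defines a projector of $H^{s_{j}}(\mathbb{R})$ onto its subspace $H^{s_{j}}_{+}(\mathbb{R})$ for both $j$, because $\Lambda_{-}$ is an extension operator (so $Pw$ vanishes on $(-\infty,0)$, and $Pw=w$ whenever $\mathrm{supp}\,w\subseteq[0,\infty)$). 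The couple $[H^{s_{0}}_{+}(\mathbb{R}),H^{s_{1}}_{+}(\mathbb{R})]$ is admissible since $C^{\infty}_{0}(0,\infty)$ is dense in each $H^{s_{j}}_{+}(\mathbb{R})$. Hence Proposition~\ref{prop4.3} and Step~1 give
\begin{equation*}
\bigl[H^{s_{0}}_{+}(\mathbb{R}),H^{s_{1}}_{+}(\mathbb{R})\bigr]_{\psi}
=H^{s,\varphi}(\mathbb{R})\cap H^{s_{0}}_{+}(\mathbb{R})=H^{s,\varphi}_{+}(\mathbb{R})
\end{equation*}
up to equivalence of norms.

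\emph{Step 3 (the interval).} Recall that $H^{s,\varphi}_{+}(0,\tau)$ is the factor space of $H^{s,\varphi}_{+}(\mathbb{R})$ by the subspace \eqref{f3.11}. Because $s_{0}\geq0$, all the spaces involved are embedded in $L_{2}(\mathbb{R})$, so no nonzero element of them is supported at the single point $0$; consequently \eqref{f3.11} coincides with $\{h:\mathrm{supp}\,h\subseteq[\tau,\infty)\}$. Let $T_{\tau}\colon H^{s_{j}}(-\infty,\tau)\to H^{s_{j}}(\mathbb{R})$ be a Hestenes extension across $t=\tau$ (obtained from a reflection operator by translation), and set $\Lambda w:=T_{\tau}(w\!\upharpoonright\!(-\infty,\tau))$ for $w\in H^{s_{j}}_{+}(\mathbb{R})$. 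Since $\Lambda w$ coincides with $w$ on $(-\infty,\tau)$, it vanishes on $(-\infty,0)$; hence $\Lambda$ maps $H^{s_{j}}_{+}(\mathbb{R})$ continuously into itself, and $\Lambda^{2}=\Lambda$. Therefore $P_{0}:=I-\Lambda$ is a projector of $H^{s_{j}}_{+}(\mathbb{R})$ onto $\ker\Lambda=\{w\in H^{s_{j}}_{+}(\mathbb{R}):\mathrm{supp}\,w\subseteq[\tau,\infty)\}$, which is exactly \eqref{f3.11}. Applying the factor-space part of Proposition~\ref{prop4.3} and then the formula of Step~2 yields \eqref{f5.6} with equivalence of norms.

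The step I expect to be the main obstacle is Step~3: the projector $P_{0}$ has to stay inside $H^{s_{j}}_{+}(\mathbb{R})$ and have kernel exactly \eqref{f3.11}, which is why the Hestenes extension must be carried out only ``outward'', across $t=\tau$; and it is precisely here that $s_{0}\geq0$ enters, both to identify $\ker\Lambda$ with \eqref{f3.11} (ruling out spurious distributions concentrated at the origin) and to guarantee boundedness of the extension operators on the possibly non-integer Sobolev spaces $H^{s_{j}}(\mathbb{R})$.
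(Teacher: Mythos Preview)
Your proposal is correct and follows essentially the same route as the paper: first cite the one-dimensional interpolation formula $H^{s,\varphi}(\mathbb{R})=[H^{s_{0}}(\mathbb{R}),H^{s_{1}}(\mathbb{R})]_{\psi}$, then use a Hestenes-type extension across $t=0$ to build a projector onto $H^{s_{j}}_{+}(\mathbb{R})$ and apply Proposition~\ref{prop4.3}, and finally use a Hestenes extension across $t=\tau$ to build a projector of $H^{s_{j}}_{+}(\mathbb{R})$ onto the $[\tau,\infty)$-supported subspace and apply the quotient part of Proposition~\ref{prop4.3}. The paper's proof is the same, only it cites \cite[Lemma~2.9.3]{Triebel95} for the bounded extension $T^{(k)}_{G}:H^{s}(G)\to H^{s}(\mathbb{R})$ valid for all real $s\in[0,k)$ and phrases the reason for \eqref{f3.11} reducing to $\{\,h:\mathrm{supp}\,h\subseteq[\tau,\infty)\,\}$ as ``because $s>0$''.
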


\begin{proof}
The formula \eqref{f5.6} can be deduced by analogy with the anisotropic spaces case
considered in the previous lemmas. For the sake of the argumentation completeness,
let us give the proof.

First note that an analog of Lemma~\ref{lem5.1} for isotropic spaces over
$\mathbb{R}^{n}$ is proved in \cite[Sec. 3.2, Theorem 3.4]{08MFAT1} (also see
\cite[Sec. 1.3.4, Theorem 1.14]{MikhailetsMurach10}). Specifically,
\begin{equation}\label{f5.7}
H^{s,\varphi}(\mathbb{R})=
\bigl[H^{s_{0}}(\mathbb{R}),H^{s_{1}}(\mathbb{R})\bigr]_{\psi}
\end{equation}
with equality of norms.

To deduce \eqref{f5.6} from \eqref{f5.7} we will apply the following one-dimensional
analog of Lemma~\ref{lem5.2} on extension operator. Let $G\subset\mathbb{R}$ be an
open semiaxis and $k\in\mathbb{N}$. Then there exists a bounded linear operator
$T^{(k)}_{G}:L_{2}(G)\rightarrow L_{2}(\mathbb{R})$ such that $T^{(k)}_{G}v$ is an
extension of $v\in L_{2}(G)$ and that the mapping $v\mapsto T^{(k)}_{G}v$ defines
the bounded operator $T^{(k)}_{G}:\nobreak H^{s}(G)\rightarrow H^{s}(\mathbb{R})$
for every real $s\in[0,k)$. Here, as usual,
\begin{gather*}
H^{s}(G):=\{h\!\upharpoonright\!G:h\in H^{s}(\mathbb{R})\},\quad \mbox{with}   \\
\|v\|_{H^{s}(G)}:= \inf\bigl\{\|h\|_{H^{s}(\mathbb{R})}:\,h\in
H^{s}(\mathbb{R}),\;\;h=v\;\,\mbox{in}\;\,G\bigr\},
\end{gather*}
is the Sobolev space over $G$ of order $s$. This analog is a special case of Lemma
2.9.3 from \cite{Triebel95}. As above, the operator $T^{(k)}_{G}$ can be constructed
with the help of the extension method by M.~R.~Hestenes.

Chose $k\in\mathbb{N}$ so that $s_{1}<k$. The mapping $P:h\mapsto
h-T^{(k)}_{G}(h\!\!\upharpoonright\!G)$, where $h\in L_{2}(\mathbb{R})$ and
$G:=(-\infty,0)$, defines the projector of the space $H^{s_{j}}(\mathbb{R})$ onto
its subspace $H^{s_{j}}_{+}(\mathbb{R})$ for every $j\in\{0,\,1\}$. Therefore by
Proposition~\ref{prop4.3} and formula \eqref{f5.7} we may write
\begin{equation}\label{f5.8}
\bigl[H^{s_{0}}_{+}(\mathbb{R}), H^{s_{1}}_{+}(\mathbb{R})\bigr]_{\psi}=
\bigl[H^{s_{0}}(\mathbb{R}),H^{s_{1}}(\mathbb{R})\bigr]_{\psi}\cap
H^{s_{0}}_{+}(\mathbb{R})=H^{s,\varphi}_{+}(\mathbb{R})
\end{equation}
up to equivalence of norms.

Now let us deduce \eqref{f5.6} from \eqref{f5.8}. Recall that
$H^{s,\varphi}_{+}(0,\tau)$ is the factor space of the space
$H^{s,\varphi}_{+}(\mathbb{R})$ by its subspace \eqref{f3.11}. The latter coincides
with
$$
H^{s,\varphi}_{[\tau,\infty)}(\mathbb{R}):=\bigl\{h\in H^{s,\varphi}(\mathbb{R}):\,
\mathrm{supp}\,h\subseteq[\tau,\infty)\bigr\}
$$
because $s>0$. The mapping $P_{\tau}:h\mapsto
h-T^{(k)}_{G_{\tau}}(h\!\!\upharpoonright\!G_{\tau})$, where $h\in
L_{2}(\mathbb{R})$ and $G_{\tau}:=(-\infty,\tau)$, sets the projector of the space
$H^{s_{j}}_{+}(\mathbb{R})$ onto its subspace
$H^{s_{j}}_{[\tau,\infty)}(\mathbb{R})$ for every $j\in\{0,\,1\}$. Therefore by
Proposition~\ref{prop4.3} and formula \eqref{f5.8} we may write
\begin{align*}
\bigl[H^{s_{0}}_{+}(0,\tau), H^{s_{1}}_{+}(0,\tau)\bigr]_{\psi}&=
\bigl[H^{s_{0}}_{+}(\mathbb{R})/H^{s_{0}}_{[\tau,\infty)}(\mathbb{R}),
H^{s_{1}}_{+}(\mathbb{R})/
H^{s_{1}}_{[\tau,\infty)}(\mathbb{R})\bigr]_{\psi}\\
&=\bigl[H^{s_{0}}_{+}(\mathbb{R}),H^{s_{1}}_{+}(\mathbb{R})\bigr]_{\psi}\big/
\bigl(\bigl[H^{s_{0}}_{+}(\mathbb{R}),H^{s_{1}}_{+}(\mathbb{R})\bigr]_{\psi}\cap
H^{s_{0}}_{[\tau,\infty)}(\mathbb{R})\bigr)\\
&=H^{s,\varphi}_{+}(\mathbb{R})/H^{s,\varphi}_{[\tau,\infty)}(\mathbb{R})=
H^{s,\varphi}_{+}(0,\tau)
\end{align*}
up to equivalence of norms. Formula \eqref{f5.6} is proved.
\end{proof}

Now we may give

\begin{proof}[The proof of Main Theorem.]
Let $\sigma>\sigma_0$ and $\varphi\in\mathcal{M}$. Chose a number
$\sigma_1\in\mathbb{N}$ so that $\sigma_1/(2b)\in\mathbb{N}$ and $\sigma_1>\sigma$.
According to M.~S.~Agranovich and M.~I.~Vishik \cite[Theorem~11.1]{AgranovichVishik64},
the mapping \eqref{f2.5} extends uniquely to isomorphisms
between Sobolev spaces
\begin{equation}\label{f5.9}
(A,B):\,H^{\sigma_k,\sigma_k/(2b)}_{+}(\Omega)\leftrightarrow\mathcal{H}_k
\quad\mbox{for every}\quad k\in\{0,1\},
\end{equation}
where
$$
\mathcal{H}_k:=H^{\sigma_k-2m,(\sigma_k-2m)/(2b)}_{+}(\Omega)\oplus
\bigoplus_{j=1}^{m}\bigl(H^{(\sigma_k-m_j-1/2)/(2b)}_{+}(0,\tau)\bigr)^{2}.
$$

Define an interpolation parameter by the formula
\begin{equation*}
\psi(r):=
\begin{cases}
\;r^{(\sigma-\sigma_{0})/(\sigma_{1}-\sigma_{0})}\,
\varphi(r^{1/(\sigma_{1}-\sigma_{0})})&\text{for}\quad r\geq1, \\
\;\varphi(1) & \text{for}\quad0<r<1,
\end{cases}
\end{equation*}
which is analogous to \eqref{f5.2}. Applying the interpolation with the function
parameter $\psi$ to \eqref{f5.9}, we get another isomorphism
\begin{equation}\label{f5.10}
(A,B):\,\bigl[H^{\sigma_0,\sigma_0/(2b)}_{+}(\Omega),
H^{\sigma_1,\sigma_1/(2b)}_{+}(\Omega)\bigr]_{\psi}\leftrightarrow
[\mathcal{H}_0,\mathcal{H}_1]_{\psi}.
\end{equation}
This isomorphism is a unique extension by continuity of the mapping \eqref{f2.5}
because $C^{\infty}_{+}(\overline{\Omega})$ is dense in the domain of \eqref{f5.10}.

Let us describe the interpolation spaces appearing in
\eqref{f5.10}. According to Lemma \ref{lem5.3} we have
\begin{equation*}
\bigl[H^{\sigma_0,\sigma_0/(2b)}_{+}(\Omega),
H^{\sigma_1,\sigma_1/(2b)}_{+}(\Omega)\bigr]_{\psi}=
H^{\sigma,\sigma/(2b),\varphi}_{+}(\Omega)
\end{equation*}
with equality of norms. Next, applying Proposition~\ref{prop4.4} and Lemmas
\ref{lem5.3} and \ref{lem5.4} we may write
\begin{align*}
[\mathcal{H}_0,\mathcal{H}_1]_{\psi}&=
\bigl[H^{\sigma_0-2m,(\sigma_0-2m)/(2b)}_{+}(\Omega),
H^{\sigma_1-2m,(\sigma_1-2m)/(2b)}_{+}(\Omega)\bigr]_{\psi}\\
&\oplus\bigoplus_{j=1}^{m}\bigl(\bigl[H^{(\sigma_0-m_j-1/2)/(2b)}_{+}(0,\tau),
H^{(\sigma_1-m_j-1/2)/(2b)}_{+}(0,\tau)\bigr]_{\psi}\bigr)^{2}\\
&=H^{\sigma-2m,(\sigma-2m)/(2b),\varphi}_{+}(\Omega)\oplus
\bigoplus_{j=1}^{m}\bigl(H^{(\sigma-m_j-1/2)/(2b),\varphi}_{+}(0,\tau)\bigr)^{2}
\end{align*}
with equality of norms. Note that the function $\psi$ satisfies \eqref{f5.2} because
the parameters $s_{0}$, $s_{1}$, and $s$ in these lemmas differ from $\sigma_{0}$,
$\sigma_{1}$, and $\sigma$ respectively in the same magnitude. Thus, the isomorphism
\eqref{f5.10} becomes \eqref{f2.6}.
\end{proof}

\section{Final remarks}\label{sec6}

Main Theorem can be used to investigate regularity of solutions to parabolic
problems. Specifically, applying H\"ormander's Embedding Theorem \cite[Theorem
2.2.7]{Hormander63}, we may establish sufficient conditions for the weak solution to
be classical (compare with \cite[Sec. 5 and 6]{07UMJ5} or \cite[Sec.
4.1.2]{MikhailetsMurach10}, where elliptic boundary--value problems are considered).

The investigation of parabolic initial--boundary value problems with nonhomogeneous
initial conditions can be reduced to the case of homogeneous ones (see
\cite[\S~10]{AgranovichVishik64} in the case of Sobolev spaces). In this connection,
we also mention J.-L.~Lions and E.~Magenes' approach \cite[Sec.
6.4]{LionsMagenes72ii} based on interpolation with a number parameter. Apparently,
their methods may admit a generalization to the case of function interpolation
parameters.

An analog of Main Theorem is also true for the many-dimensional case, when the
parabolic problem is given in a cylinder situated in $\mathbb{R}^{n+1}$, with
$n\geq2$. This analog can be deduced from M.~S.~Agranovich and M.~I.~Vishik's result
\cite[Theorem~11.1]{AgranovichVishik64} by means of interpolation with a function
parameter.

The above-mentioned applications and generalizations of Main Theorem will be
published elsewhere.


\end{document}